\newif\ifdraft
\patchcmd{\@citex}{\if@filesw}{\getcitekey\@citeb \if@filesw}%
    {\typeout{*** SUCCESS ***}}{\typeout{*** FAIL ***}}
\patchcmd{\nocite}{\if@filesw}{\getcitekey\@citeb \if@filesw}%
    {\typeout{*** SUCCESS ***}}{\typeout{*** FAIL ***}}
\definecolor{labelkey}{gray}{0.5}
\newcommand{\Dmod}{\mathscr{D}}
\newcommand{\Mmod}{\mathcal{M}}
\newcommand{\ZZ}{\mathbb{Z}}
\newcommand{\QQ}{\mathbb{Q}}
\newcommand{\CC}{\mathbb{C}}
\newcommand{\PP}{\mathbb{P}}
\DeclareMathOperator{\gr}{gr}
\DeclareMathOperator{\Pic}{Pic}
\newcommand{\shf}[1]{\mathscr{#1}}
\def\overbar#1#2#3{{%
	\setbox0=\hbox{\displaystyle{#1}}%
	\dimen0=\wd0
	\advance\dimen0 by -#2 
	\vbox {\nointerlineskip \moveright #3 \vbox{\hrule height 0.3pt width \dimen0}%
		\nointerlineskip \vskip 1.5pt \box0}%
}}
\newcommand{\tl}{t_{\ast}}
\newcommand{\DD}{\mathbb{D}}
\newcommand{\shO}{\shf{O}}
\let\@@seccntformat\@seccntformat
\renewcommand*{\@seccntformat}[1]{%
  \expandafter\ifx\csname @seccntformat@#1\endcsname\relax
    \expandafter\@@seccntformat
  \else
    \expandafter
      \csname @seccntformat@#1\expandafter\endcsname
  \fi
    {#1}%
}
\newcommand*{\@seccntformat@subsection}[1]{%
  \textbf{\csname the#1\endcsname.}
}
\let\@paragraph\paragraph
\renewcommand*{\paragraph}[1]{%
	\vspace{0.3\baselineskip}%
	\@paragraph{\textit{#1}}%
}
\newtheorem{theorem}[equation]{Theorem}
\newtheorem*{theorem*}{Theorem}
\newtheorem{lemma}[equation]{Lemma}
\newtheorem*{lemma*}{Lemma}
\newtheorem{corollary}[equation]{Corollary}
\newtheorem*{corollary*}{Corollary}
\newtheorem*{proposition*}{Proposition}
\newtheorem*{conjecture*}{Conjecture}
\theoremstyle{definition}
\newtheorem{definition}[equation]{Definition}
\newtheorem*{definition*}{Definition}
\newtheorem{remark}[equation]{Remark}
\newtheorem*{remark*}{Remark}
\newtheorem{question}[equation]{Question}
\newtheorem*{question*}{Question}
\newtheorem{example}[equation]{Example}
\newtheorem*{example*}{Example}
\newtheorem*{problem*}{Problem}
\theoremstyle{plain}
\newcommand{\theoremref}[1]{\hyperref[#1]{Theorem~\ref*{#1}}}
\newcommand{\lemmaref}[1]{\hyperref[#1]{Lemma~\ref*{#1}}}
\newcommand{\definitionref}[1]{\hyperref[#1]{Definition~\ref*{#1}}}
\newcommand{\propositionref}[1]{\hyperref[#1]{Proposition~\ref*{#1}}}
\newcommand{\conjectureref}[1]{\hyperref[#1]{Conjecture~\ref*{#1}}}
\newcommand{\corollaryref}[1]{\hyperref[#1]{Corollary~\ref*{#1}}}
\newcommand{\exampleref}[1]{\hyperref[#1]{Example~\ref*{#1}}}
\newcommand{\exerciseref}[1]{\hyperref[#1]{Exercise~\ref*{#1}}}
\let\old@caption\caption
\renewcommand*{\caption}[1]{%
	\setcounter{figure}{\value{equation}}%
	\stepcounter{equation}%
	\old@caption{#1}\relax%
}
\newcounter{intro}
\newtheorem{intro-conjecture}[intro]{Conjecture}
\newtheorem{intro-corollary}[intro]{Corollary}
\newtheorem{intro-theorem}[intro]{Theorem}
\newcommand{\shB}{\mathcal{B}}
\newcommand{\parref}[1]{\hyperref[#1]{\S\ref*{#1}}}
\newcommand{\chapref}[1]{\hyperref[#1]{Chapter~\ref*{#1}}}
\newcommand*\if@single[3]{%
  \setbox0\hbox{${\mathaccent"0362{#1}}^H$}%
  \setbox2\hbox{${\mathaccent"0362{\kern0pt#1}}^H$}%
  \ifdim\ht0=\ht2 #3\else #2\fi
  }
\newcommand*\rel@kern[1]{\kern#1\dimexpr\macc@kerna}
\newcommand*\widebar[1]{\@ifnextchar^{{\wide@bar{#1}{0}}}{\wide@bar{#1}{1}}}
\newcommand*\wide@bar[2]{\if@single{#1}{\wide@bar@{#1}{#2}{1}}{\wide@bar@{#1}{#2}{2}}}
\newcommand*\wide@bar@[3]{%
  \begingroup
  \def\mathaccent##1##2{%
    \if#32 \let\macc@nucleus\first@char \fi
    \setbox\z@\hbox{$\macc@style{\macc@nucleus}_{}$}%
    \setbox\tw@\hbox{$\macc@style{\macc@nucleus}{}_{}$}%
    \dimen@\wd\tw@
    \advance\dimen@-\wd\z@
    \divide\dimen@ 3
    \@tempdima\wd\tw@
    \advance\@tempdima-\scriptspace
    \divide\@tempdima 10
    \advance\dimen@-\@tempdima
    \ifdim\dimen@>\z@ \dimen@0pt\fi
    \rel@kern{0.6}\kern-\dimen@
    \if#31
      \overline{\rel@kern{-0.6}\kern\dimen@\macc@nucleus\rel@kern{0.4}\kern\dimen@}%
      \advance\dimen@0.4\dimexpr\macc@kerna
      \let\final@kern#2%
      \ifdim\dimen@<\z@ \let\final@kern1\fi
      \if\final@kern1 \kern-\dimen@\fi
    \else
      \overline{\rel@kern{-0.6}\kern\dimen@#1}%
    \fi
  }%
  \macc@depth\@ne
  \let\math@bgroup\@empty \let\math@egroup\macc@set@skewchar
  \mathsurround\z@ \frozen@everymath{\mathgroup\macc@group\relax}%
  \macc@set@skewchar\relax
  \let\mathaccentV\macc@nested@a
  \if#31
    \macc@nested@a\relax111{#1}%
  \else
    \def\gobble@till@marker##1\endmarker{}%
    \futurelet\first@char\gobble@till@marker#1\endmarker
    \ifcat\noexpand\first@char A\else
      \def\first@char{}%
    \fi
    \macc@nested@a\relax111{\first@char}%
  \fi
  \endgroup
}
\newcommand{\I}{\mathcal{I}}
\renewcommand{\DD}{\mathbf{D}}
\newcommand{\AAA}{\mathbf{A}}
\begin{document}

\title[$\Dmod$-modules in birational geometry]{$\Dmod$-modules in birational geometry}

\author{Mihnea Popa}
\address{Department of Mathematics, Northwestern University,
2033 Sheridan Road, Evanston, IL 60208, USA} 
\email{mpopa@math.northwestern.edu}

\thanks{The author was partially supported by the NSF grant DMS-1700819.}

\subjclass[2010]{14F10, 14J17, 32S25, 14F17, 14F18, 14C30}

\begin{abstract}
It is well known that numerical quantities arising from the theory of $\Dmod$-modules are related to
invariants of singularities in birational geometry. This paper surveys a deeper relationship between the 
two areas, where the numerical connections are enhanced to sheaf theoretic constructions facilitated by the theory of mixed 
Hodge modules. The emphasis is placed on the recent theory of Hodge ideals.
\end{abstract}


\maketitle

\markboth{MIHNEA POPA} 
{$\Dmod$-modules in birational geometry}

\makeatletter
\newcommand\@dotsep{4.5}
\def\@tocline#1#2#3#4#5#6#7{\relax
  \ifnum #1>\c@tocdepth 
  \else
    \par \addpenalty\@secpenalty\addvspace{#2}%
    \begingroup \hyphenpenalty\@M
    \@ifempty{#4}{%
      \@tempdima\csname r@tocindent\number#1\endcsname\relax
    }{%
      \@tempdima#4\relax
    }%
    \parindent\z@ \leftskip#3\relax
    \advance\leftskip\@tempdima\relax
    \rightskip\@pnumwidth plus1em \parfillskip-\@pnumwidth
    #5\leavevmode\hskip-\@tempdima #6\relax
    \leaders\hbox{$\m@th
      \mkern \@dotsep mu\hbox{.}\mkern \@dotsep mu$}\hfill
    \hbox to\@pnumwidth{\@tocpagenum{#7}}\par
    \nobreak
    \endgroup
  \fi}
\def\l@section{\@tocline{1}{0pt}{1pc}{}{\bfseries}}
\def\l@subsection{\@tocline{2}{0pt}{25pt}{5pc}{}}
\makeatother



\setlength{\parskip}{0.5\baselineskip}

\subsection{Introduction}
Ad hoc arguments based on differentiating rational functions or sections of line bundles abound in complex and birational geometry. To pick just a couple of examples, topics where such arguments have made a deep impact are the study of adjoint linear series on smooth projective varieties, see for instance Demailly's work on effective very ampleness \cite{Demailly} and its more algebraic incarnation in \cite{ELN}, and the study of hyperbolicity, see for instance Siu's survey \cite{Siu} and the references therein.

A systematic approach, as well as an enlargement of the class of objects to which differentiation techniques apply, is provided by the theory of $\Dmod$-modules, which has however only recently begun to have a stronger impact in birational geometry. 
The new developments are mainly due to a better understanding of Morihiko Saito's theory of mixed Hodge modules \cite{Saito-MHP}, \cite{Saito-MHM}, and hence to deeper connections with Hodge theory and coherent sheaf theory. Placing problems in this context automatically brings in important tools such as vanishing theorems, perverse sheaves, or the $V$-filtration, in a unified way.

Connections between invariants arising from log resolutions of singularities and invariants arising from the theory of 
$\Dmod$-modules go back a while however. A well-known such instance is the fact that the log canonical threshold of a function 
$f$ on (say) $\CC^n$ coincides with the negative of the largest root of the Bernstein-Sato polynomial $b_f (s)$; see e.g. \cite{Yano}, \cite{Kollar}. Numerical data on log resolutions plays a role towards the study of other roots of the Bernstein-Sato polynomial \cite{Kashiwara}, \cite{Lichtin}, though our understanding of these is far less thorough. Going one step further, a direct relationship between the multiplier ideals of a hypersurface in a smooth variety $X$ and the $V$-filtration it induces on $\shO_X$  was established in \cite{BS}. 

After reviewing some of this material, in this paper I focus on one direction of further development, worked out jointly with Musta\c t\u a \cite{MP1}, \cite{MP2}, \cite{MP3}, \cite{MP4} as well as by Saito in \cite{Saito-MLCT}, namely the theory of what we call \emph{Hodge ideals}. This is a way of thinking about the Hodge filtration (in the sense of mixed Hodge modules) on the sheaf of functions with arbitrary poles along a hypersurface, or twists thereof, and is closely related to both the singularities of the hypersurface and the Hodge theory of its complement.  There are two key approaches that have proved useful towards understanding Hodge ideals:
\begin{enumerate}
\item A birational study in terms of log resolutions, modeled on the algebraic theory of multiplier ideals, which Hodge ideals generalize, \cite{MP1}, \cite{MP3}.
\item A comparison with the (microlocal) $V$-filtration, using its interaction with the Hodge filtration in the case of mixed Hodge modules, \cite{Saito-MLCT}, \cite{MP4}.
\end{enumerate}

Hodge ideals are indexed by the non-negative integers; at the $0$-th step, they essentially coincide with multiplier ideals. Beyond the material presented in this paper, by analogy it will be interesting to develop a theory of Hodge ideals associated to ideal sheaves (perhaps 
leading to asymptotic constructions as well), to attempt an alternative analytic approach, and to establish connections with constructions 
in positive characteristic generalizing test ideals.

There are other ways in which filtered $\Dmod$-modules underlying Hodge modules have been used in recent years 
in complex and birational geometry, for instance in the study of generic vanishing theorems, holomorphic forms, topological invariants, 
families of varieties and hyperbolicity; see e.g. \cite{DMS}, \cite{Schnell4}, \cite{mhmgv}, \cite{Wang}, \cite{PS}, \cite{PS2}, \cite{PPS}, \cite{Wei}. The bulk of the recent survey \cite{Popa2} treats part of this body of work, so I have decided not to discuss it here again. In any event, the reader is advised to use \cite{Popa2}  as a companion to this article, as introductory material on 
$\Dmod$-modules and Hodge modules together with a guide to technical literature can be found there (especially in Ch. B, C). Much of that will not be repeated here, for space reasons.


\noindent
{\bf Acknowledgements.}
Most of the material in this paper describes joint work with Mircea Musta\c t\u a, and many ideas are due to him. 
Special thanks are due to Christian Schnell, who helped my understanding of Hodge modules through numerous 
discussions and collaborations. I am also indebted to Morihiko Saito, whose work and ideas bear a deep influence on the
topics discussed here. Finally, I thank Yajnaseni Dutta, Victor Lozovanu, Mircea Musta\c t\u a, Lei Wu and Mingyi Zhang for comments.


\subsection{$V$-filtration, Bernstein-Sato polynomial, and birational invariants}\label{general_Vfiltration}
One of the main tools in the theory of mixed Hodge modules is the $V$-filtration along a hypersurface, and its interaction 
with the Hodge filtration. Important references regarding the $V$-filtration include \cite{Kashiwara2}, 
\cite{Malgrange}, \cite{Sabbah}, \cite{Saito-MHP}, \cite{Saito-MV}.

First, let's recall the graph construction.  Let $D$ be an effective divisor on $X$, given (locally, in coordinates
$x_1, \ldots, x_n$) by $f = 0$ with $f \in \shO_X$, and whose support is $Z$. Consider the embedding of $X$ given by the graph of $f$, namely:
$$i_f = ({\rm id}, f) \colon X \hookrightarrow X \times \CC = Y, \,\,\,\,\,\,x \to \big(x, f(x)\big).$$
On $\CC$ we consider the coordinate $t$, and a vector field $\partial_t$ such that $[\partial_t,t]=1$.

Let $(\Mmod, F)$ be a filtered left $\Dmod_X$-module. We denote
$$(\Mmod_f, F) :=  {i_f}_+ (\Mmod, F) = (\Mmod, F) \otimes_{\CC} (\CC[\partial_t], F),$$
where the last equality (which means the filtration is the convolution filtration) is the definition of push-forward for 
filtered $\Dmod$-modules via a closed embedding. More precisely, we have 
\begin{itemize}
\item $\Mmod_f = \Mmod \otimes_{\CC} \CC[\partial_t]$, with action of $\Dmod_Y = \Dmod_X [t, \partial_t]$ given by: 
$\shO_X$ acts by functions on $\Mmod$, and
$$\partial_{x_i}\cdot  (g \otimes \partial_t^i) = \partial_{x_i} g \otimes t^i - (\partial_{x_i}f)g \otimes \partial_t^{i+1},$$
$$t \cdot (g \otimes \partial_t^i) = fg \otimes \partial_t^i - i g\otimes \partial_t^{i-1},\,\,\,\,\,\,{\rm and} \,\,\,\,\,\,
\partial_t \cdot (g \otimes \partial_t^i) = g \otimes \partial_t^{i+1}.$$

\medskip

\item  $F_p \Mmod_f = \bigoplus_{i=0}^p F_{p-i} \Mmod \otimes \partial_t^i$ for all $p \in \ZZ$.
\end{itemize}

One of the key technical tools in the study of $\Dmod$-modules is the $V$-filtration. The $\ZZ$-indexed version always exists and is 
unique when $\Mmod_f$ is a regular holonomic  $\Dmod_Y$-module, due to work of Kashiwara \cite{Kashiwara2} and Malgrange \cite{Malgrange}. Assuming in addition that the local monodromy along $f$ is quasi-unipotent, a condition of Hodge-theoretic origin satisfied by all the objects appearing here, one can also consider the following $\QQ$-indexed version; cf. \cite[3.1.1]{Saito-MHP}.

\begin{definition}[Rational $V$-filtration]
A \emph{rational $V$-filtration}  of $\Mmod_f$ is a decreasing filtration $V^{\alpha} \Mmod_f$ with 
$\alpha \in \QQ$ satisfying the following properties:

\noindent
$\bullet$\,\,\,\,The filtration is exhaustive, i.e. $\bigcup_{\alpha} V^{\alpha} \Mmod_f = \Mmod_f$, and each 
$V^{\alpha} \Mmod_f$ is a coherent $\shO_Y [\partial_{x_i}, \partial_t t]$-submodule of $\Mmod_f$.

\noindent
$\bullet$\,\,\,\,$t\cdot V^{\alpha} \Mmod_f \subseteq V^{\alpha +1} \Mmod_f$ and 
$\partial_t\cdot V^{\alpha} \Mmod_f \subseteq V^{\alpha -1} \Mmod_f$ for all $\alpha \in \QQ$.

\noindent
$\bullet$\,\,\,\,$t\cdot V^{\alpha} \Mmod_f= V^{\alpha+1} \Mmod_f \,\,\,\, {\rm for ~} \alpha > 0$.

\noindent
$\bullet$\,\,\,\,The action of $\partial_t t -\alpha$ on $\gr^{\alpha}_V \Mmod_f$ is nilpotent for each $\alpha$. (One defines 
$\gr_{\alpha}^V \Mmod_f$ as $V^{\alpha} \Mmod_f / V^{> \alpha} \Mmod_f$, 
where $V^{> \alpha} \Mmod_f = \cup_{\beta > \alpha} V^{\beta} \Mmod_f$.)
\end{definition}

We will consider other $\Dmod$-modules later on, but for the moment let's focus on the case $\Mmod = \shO_X$, with the trivial filtration 
$F_k \shO_X = \shO_X$ for $k \ge 0$, and $F_k \shO_X = 0$ for $k < 0$. It is standard to denote $\shB_f : = (\shO_X)_f$. Via 
the natural inclusion of $\shO_X$ in $\shB_f$, for $\alpha \in \QQ$ one defines
$$V^\alpha \shO_X : = V^\alpha \shB_f \cap \shO_X,$$
a decreasing sequence of coherent ideal sheaves on $X$. A first instance of the connections
we focus on here is the following result of Budur-Saito:

\begin{theorem}[{\cite[Theorem~0.1]{BS}}]\label{BS}
If $D$ is an effective divisor on $X$, then for every $\alpha \in \QQ$ one has
$$V^\alpha \shO_X = \I \big((\alpha - \epsilon)D\big),$$
the multiplier ideal  of the $\QQ$-divisor $(\alpha - \epsilon)D$, where $0 < \epsilon \ll 1$ is a rational number.
\end{theorem}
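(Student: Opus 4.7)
The plan is to reduce to the simple normal crossings case via a log resolution, compute the $V$-filtration explicitly there, and descend using Saito's compatibility of the $V$-filtration with proper direct images.

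First, choose a log resolution $\pi \colon Y \to X$ of the pair $(X, D)$ so that $\pi^{\ast} D = \sum_i a_i E_i$ has simple normal crossings support and $K_{Y/X} = \sum_i k_i E_i$ is supported on the same divisor. By the very definition of multiplier ideals,
$$
\I\bigl((\alpha - \epsilon) D\bigr) \;=\; \pi_{\ast} \shO_Y\bigl(K_{Y/X} - \lfloor (\alpha - \epsilon) \pi^{\ast} D \rfloor\bigr)
$$
for $0 < \epsilon \ll 1$, independently of the resolution. The goal is thus to identify $V^\alpha \shO_X$ with this push-forward.

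Second, verify the theorem in the SNC case on $Y$: compute $V^\alpha \shO_Y$ locally when $g := f \circ \pi = x_1^{a_1} \cdots x_r^{a_r}$. Using the explicit $\Dmod_Y$-module structure on $\shB_g = (\shO_Y)_g$ recorded in the excerpt, together with the interplay between $\partial_t t$ and the weighted Euler operator $\sum_i a_i x_i \partial_{x_i}$, one writes down a candidate decreasing $\QQ$-indexed filtration on $\shB_g$ whose graded pieces are generalized eigenspaces of $\partial_t t$, and verifies the four defining axioms. Uniqueness of the rational $V$-filtration (available since $\shB_g$ is regular holonomic with quasi-unipotent monodromy) identifies this with the actual filtration, and intersection with $\shO_Y \hookrightarrow \shB_g$ yields
$$
V^\alpha \shO_Y \;=\; \shO_Y\bigl(-\lfloor (\alpha - \epsilon) \pi^{\ast} D \rfloor\bigr),
$$
i.e.\ the SNC case of the theorem.

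Third, descend from $Y$ to $X$ via $\pi$. Since $\pi$ is proper birational between smooth varieties, $\shO_Y$ underlies a pure Hodge module whose direct image under $\pi_{+}$ is $\shO_X$. Saito's compatibility of the $V$-filtration with proper direct images of filtered $\Dmod$-modules underlying mixed Hodge modules, together with the strictness implied by purity, then gives (after accounting for the left-to-right conversion that produces the standard $\omega_{Y/X}$ twist, and Grauert-Riemenschneider-type vanishing that kills higher derived images)
$$
V^\alpha \shO_X \;=\; \pi_{\ast}\bigl(\omega_{Y/X} \otimes V^\alpha \shO_Y\bigr) \;=\; \pi_{\ast} \shO_Y\bigl(K_{Y/X} - \lfloor (\alpha - \epsilon) \pi^{\ast} D \rfloor\bigr).
$$
Combined with Step 1, this is exactly $\I\bigl((\alpha - \epsilon) D\bigr)$.

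The main obstacle is Step 3: the transport of the $V$-filtration through the birational push-forward. This relies on Saito's direct image theorem for mixed Hodge modules, on strictness of filtered complexes, and on vanishing of higher direct images of the relevant subsheaves so that the equality holds at the level of sheaves rather than merely in a derived category. Step 2 is a concrete local computation, delicate mainly because all four axioms of a rational $V$-filtration must be checked against the explicit formulas for the $\Dmod_Y$-action on $\shB_g$.
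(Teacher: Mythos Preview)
The paper does not prove this theorem; it is quoted from Budur--Saito \cite{BS} as background, so there is no ``paper's own proof'' to compare against. Your outline is in fact the strategy of the original Budur--Saito argument: log resolution, explicit SNC computation, and descent via Saito's direct image machinery.

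One point deserves sharpening. In Step~3 you write $V^\alpha \shO_X = \pi_{\ast}(\omega_{Y/X} \otimes V^\alpha \shO_Y)$ and attribute it to ``compatibility of the $V$-filtration with proper direct images \dots\ together with strictness.'' This is the correct target, but the mechanism is not that the $V$-filtration alone pushes forward; rather one uses the identification $V^\alpha \shO_X = F_0 \shB_f \cap V^\alpha \shB_f$ (since $F_0 \shB_f = \shO_X \otimes 1$) and then invokes the \emph{bistrictness} of the direct image of a Hodge module with respect to the pair $(F_\bullet, V^\bullet)$, i.e.\ Saito's \cite[3.3.17]{Saito-MHP}. That is what allows you to compute $F_0 \cap V^\alpha$ upstairs and push it down, and it is also where the $\omega_{Y/X}$ twist and the higher-direct-image vanishing enter cleanly. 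Your parenthetical remarks touch on all of these ingredients, but as written the displayed formula looks like a statement about $V$ alone, which would not be available without the Hodge filtration in the picture. Making the role of $F_0$ explicit would close the gap.
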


Multiplier ideal sheaves are ubiquitous objects in birational geometry, encoding local numerical invariants of singularities, 
and satisfying Kodaira-type vanishing theorems in the global setting; see \cite[Ch.~9]{Lazarsfeld}.  If $f \colon Y \to X$ 
is a log resolution of the pair $(X, D)$, and $c \in \QQ$, then by definition the multiplier ideal of $cD$ is 
$$\I (c D) = f_* \shO_Y \big(K_{Y/X} - [cf^*D]\big).$$
Let me take the opportunity to also introduce the following notation, to be used repeatedly. Denote $Z = D_{{\rm red}}$, and
define integers $a_i$, $b_i$ and $c_i$ by the expressions
$$f^*Z = \tilde Z + a_1 F_1 + \cdots + a_m F_m$$
and 
$$K_{Y/X} = b_1 F_1 + \cdots + b_m F_m + c_{m+1} F_{m+1} + \cdots + c_n F_n,$$
where $F_j$ are the components of the exceptional locus and $a_i \neq 0$. We denote
\begin{equation}\label{gamma}
\gamma = \underset{1 \le i\le m}{\rm min} \frac {b_i + 1}{a_i}.
\end{equation}

Recall that the \emph{Bernstein-Sato polynomial} of $f$ is the unique monic polynomial $b_f (s)$ of minimal degree, in the variable 
$s$, such that there exists $P\in \Dmod_X [s]$ satisfying the formal identity
$$b_f (s) f^s = P f^{s+1}.$$
See for instance \cite{Kashiwara}, \cite{Sabbah}, \cite{Saito-MV}, while a nice survey can be found in \cite{Granger}.
It can be shown that $b_f (s)$ is independent of the choice of $f$ such that $D = {\rm div}(f)$ locally, and so one also has a function $b_D (s)$ which is 
globally well defined; however, to keep a unique simple notation, in the statements below all information about the pair $(X, D)$ 
related to $b_f(s)$ should be understood locally in this sense.

The roots of the Bernstein-Sato polynomial are interesting invariants of the singularities of $f$, and a number of important 
facts regarding them have been established in the literature. Here are some of the most significant; a posteriori, many of these facts also follow from Theorem \ref{BS} and the connection between the Bernstein-Sato polynomial and the $V$-filtration.
\begin{enumerate}
\item The roots of $b_f (s)$ are negative rational numbers; see \cite{Kashiwara}.
\item More precisely, in the notation above, all the roots of $b_f (s)$ are of the form $- \frac{b_i + \ell}{a_i}$ for some $i \ge 0$ and $\ell \ge 1$; see \cite[Theorem~5]{Lichtin}.
\item The negative $\alpha_f$ of the largest root of $b_f(s)$ is the log canonical threshold of $(X, D)$; \cite[Theorem~10.6]{Kollar}, see also \cite{Yano}, \cite{Lichtin}. 
\item Moreover, all jumping numbers of the pair $(X, D)$ (see \cite[9.3.22]{Lazarsfeld}) in the interval $(0, 1]$ can be 
found among the roots of $b_f (s)$; \cite[Theorem~B]{ELSV}, see also  \emph{loc. cit.} for further references, and \cite{Lichtin2} for a related circle of ideas.
\end{enumerate}

For instance, it is well known that $\alpha_f$ can be characterized in terms of the $V$-filtration as 
$$\alpha_f = {\rm max}~\{\beta \in \QQ~|~ V^\beta \shO_X = \shO_X\},$$
see for instance \cite[(1.2.5)]{Saito-MLCT},
while the log canonical threshold has a similar characterization in terms of the triviality of $\I\big((\beta- \epsilon)D)$.

Assuming that $f$ is not invertible, it is not hard to see that $-1$ is always a root of $b_f (s)$. The polynomial 
$$\tilde{b}_f (s) = \frac{b_f (s)}{s+1}$$
is the \emph{reduced Bernstein-Sato polynomial} of $f$. Inspired by (3) above and a connection with the microlocal $V$-filtration 
\cite{Saito-MV} (cf also \S\ref{Vfiltration}), Saito introduced:

\begin{definition}
The \emph{microlocal log canonical threshold} $\tilde{\alpha}_f$ is the negative of the largest root of the reduced 
Bernstein-Sato polynomial $\tilde{b}_f (s)$. 
\end{definition}

In particular, if $\tilde{\alpha}_f \le 1$, then it coincides with the log canonical threshold. In other words, 
$\tilde{\alpha}_f$ provides a new interesting invariant precisely when the pair $(X, D)$ is log canonical. It is 
already known to be related to standard types of singularities:  

\begin{theorem}\label{Saito_rational}
Assume that $D$ is reduced. Then 
\begin{enumerate}
\item \cite[Theorem~0.4]{Saito-B} ~~$D$ has rational singularities if and only if $\tilde{\alpha}_f > 1$.
\item \cite[Theorem~0.5]{Saito-HF}~~$D$ has Du Bois singularities if and only if $\tilde{\alpha}_f \ge 1$.\footnote{An equivalent 
statement can be found in \cite[Corollary~6.6]{KS}, where it is shown that $D$ is Du Bois if and only if the pair $(X,D)$ is log canonical.}
\end{enumerate}
\end{theorem}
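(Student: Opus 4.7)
The plan is to deduce both parts from the interplay between the reduced Bernstein-Sato polynomial $\tilde{b}_f(s)$, the microlocal $V$-filtration $\tilde{V}^{\bullet}$ on $\shB_f$, and the Hodge filtration on $\shO_X(*D)$. Part (2) will follow from a short root-theoretic computation combined with the known equivalence between log canonicity and Du Bois singularities for reduced hypersurfaces; part (1) requires genuinely deeper Hodge-theoretic input.

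For (2), I first observe that since $b_f(s) = (s+1)\tilde{b}_f(s)$ and $-1$ is always a root of $b_f(s)$, the largest root $-\alpha_f$ of $b_f(s)$ satisfies $\alpha_f \le 1$. If $\alpha_f < 1$, then $-\alpha_f \ne -1$ remains a root of $\tilde{b}_f(s)$ of the same multiplicity, so $\tilde{\alpha}_f = \alpha_f < 1$; if $\alpha_f = 1$, then removing one factor of $(s+1)$ leaves a largest root of $\tilde{b}_f(s)$ no smaller than $-1$, so $\tilde{\alpha}_f \ge 1$. Hence $\tilde{\alpha}_f \ge 1$ if and only if $\alpha_f = 1$, which by fact (3) above is equivalent to $(X,D)$ being log canonical; the footnote citation of \cite{KS} then equates this with $D$ being Du Bois.

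For (1), the strategy is (a) to translate $\tilde{\alpha}_f > 1$ into the Hodge-theoretic statement $F_0 \shO_X(*D) = \shO_X(D)$ (equivalently, that the $0$-th Hodge ideal $I_0(D)$ equals $\shO_X$), and (b) to identify this equality with $D$ having rational singularities. Step (a) uses that the jumps of the microlocal $V$-filtration $\tilde{V}^{\bullet}$ occur exactly at the roots of $\tilde{b}_f(s)$, combined with Saito's comparison between the Hodge filtration and $\tilde{V}^{\bullet}$ on $\shO_X(*D)$, which converts the inequality $\tilde{\alpha}_f > 1$ into the desired equality of Hodge pieces. For step (b), I would invoke Saito's Hodge-theoretic characterization of rational singularities for reduced hypersurfaces, proved by comparing $j_{\ast}\QQ^H_{X \setminus D}[n]$ with the intermediate extension $\mathrm{IC}^H_D$ and tracking the $F_{-n}$-piece of the appropriate weight-graded quotient.

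The main obstacle is the precise compatibility in step (a): the Hodge filtration is not in general strictly compatible with $\tilde{V}^{\bullet}$ at integer exponents, and $\alpha = 1$ is exactly where rational and Du Bois singularities diverge. Controlling this boundary is the technical heart of Saito's argument in \cite{Saito-B}, and requires a careful analysis of the nilpotent action of $\partial_t t - 1$ on $\gr_{\tilde{V}}^1 \shB_f$ together with the structure theorem for the Hodge filtration on a localization along a divisor. Once this comparison is established, both equivalences fall out, with (2) corresponding to the equality $\tilde{\alpha}_f = \alpha_f$ at the log canonical boundary and (1) to the strict inequality forced by rationality.
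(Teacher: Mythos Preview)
First, note that the paper itself does not prove this theorem; it is quoted from Saito's work with citations to \cite{Saito-B} and \cite{Saito-HF}. So there is no ``paper's own proof'' to compare against, and your proposal is being evaluated on its own merits.

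Your argument for (2) is essentially correct. The elementary observation that $\tilde{\alpha}_f \ge 1$ if and only if $\alpha_f = 1$ (all roots of $b_f$ lie in $(-\infty,-1]$) is right, and combined with fact~(3) and the Du Bois $\Leftrightarrow$ log canonical equivalence from \cite{KS} this gives the result. (Minor slip: you wrote that removing $(s+1)$ ``leaves a largest root no smaller than $-1$''; you mean no \emph{larger} than $-1$, so that $\tilde{\alpha}_f \ge 1$.)

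Your outline for (1), however, contains a genuine error. In step~(a) you propose to translate $\tilde{\alpha}_f > 1$ into the condition $F_0\,\shO_X(*D) = \shO_X(D)$, i.e.\ $I_0(D) = \shO_X$. But this is false: by Theorem~\ref{Saito1}(2) and Theorem~\ref{BS} (or Example~\ref{I_0}), one has $I_0(D) = \I\big((1-\epsilon)D\big)$, so $I_0(D) = \shO_X$ is equivalent to $(X,D)$ being log canonical, which by your own part~(2) is $\tilde{\alpha}_f \ge 1$, \emph{not} $\tilde{\alpha}_f > 1$. Likewise your step~(b) cannot hold as stated, since $I_0(D)=\shO_X$ characterizes Du Bois, not rational, singularities (cf.\ the examples after Definition~\ref{k-log-can}: irreducible theta divisors have rational singularities yet can fail to be $1$-log canonical, while they are always $0$-log canonical). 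In effect, your steps (a)+(b), if they went through, would reprove part~(2) rather than part~(1).

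The distinction between $\tilde{\alpha}_f \ge 1$ and $\tilde{\alpha}_f > 1$ is exactly the content separating (2) from (1), and your plan does not isolate the Hodge-theoretic object that detects this strict inequality. What is actually needed is finer: one must compare the condition $\tilde{V}^{>1}\shO_X = \shO_X$ (equivalently, $\gr_{\tilde V}^1 \shO_X = 0$) with the triviality of the adjoint ideal ${\rm Adj}(D)$, or equivalently with the vanishing of the appropriate piece of the Hodge filtration on the vanishing cycles $\phi_{f,1}\QQ_X^H[n]$. You gesture at this in your final paragraph (the nilpotent action on $\gr_{\tilde V}^1$, the comparison with $\mathrm{IC}^H_D$), but those remarks belong at the core of the argument, not as a caveat; the bulk of your outline is aimed at the wrong target.
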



\begin{example}\label{homogeneous}
If $f$ is a weighted homogeneous polynomial such that $x_i$ has weight $w_i$,  the convention being that if $f$ is a sum of monomials 
$x_1^{m_1}\cdots x_n^{m_n}$ then $\sum m_i w_i =1$, we have $\tilde{\alpha}_f  = \sum_{i=1}^n w_i$; see e.g. \cite[4.1.5]{Saito-HF}.
\end{example}

It is well known that the log canonical threshold of the pair $(X, D)$ can be computed in terms of discrepancies; in fact, 
using the notation in ($\ref{gamma}$), given any log resolution one has
$$\alpha_f = {\rm min}\{1, \gamma\}.$$
Similar precise formulas are not known for other roots of the Bernstein-Sato polynomial. Lichtin asked the following regarding the microlocal log canonical threshold.

\begin{question}\cite[Remark 2,~p.303]{Lichtin}\label{Lichtin}
~~Is it true that $\gamma = \tilde{\alpha}_f$?
\end{question}

When $\tilde{\alpha}_f \le 1$, this is indeed the case by the discussion above. As noted by Koll\'ar \cite[Remark~10.8]{Kollar}, the question otherwise has a negative answer, simply due to the fact that in general the 
quantity on the right hand side depends on the choice of log resolution. One of the outcomes of the results surveyed in this paper will be however the inequality $\gamma \le  \tilde{\alpha}_f$; see Theorem \ref{MLCT_bound}. It would be interesting to find similar results for other roots of $\tilde{b}_f (s)$.

\subsection{Hodge filtration on localizations}\label{HF}
I will now start focusing on the Hodge filtration. Saito's theory of mixed Hodge modules produces useful filtered $\Dmod$-modules of geometric and Hodge theoretic origin on complex varieties, which 
extend the notion of a variation of Hodge structure when singularities (of fibers of morphisms, of hypersurfaces, of ambient varieties, etc.) are involved; see for instance the examples 
in \cite[\S2]{Popa2}. Usually the $\Dmod$-module itself is quite complicated, but here we deal with one of the simplest ones. 

Namely, if $X$ is smooth complex variety of dimension $n$, and $D$ is a reduced effective divisor on $X$, we consider the left $\Dmod_X$-module
$$\shO_X (*D) = \bigcup_{k \ge 0} \shO_X (kD)$$
of functions with arbitrary poles along $D$. Locally, if $D = {\rm div}(f)$, then $\shO_X(*D)$ is simply the localization $\shO_X [f^{-1}]$, on which differential operators act by the quotient rule.
This $\Dmod_X$-module underlies the extension of the trivial Hodge module across $D$, i.e. the mixed
Hodge module $j_* \QQ_U^H [n]$, where $U = X \smallsetminus D$ and $j: U \hookrightarrow X$ is the inclusion map. A main feature of $\Dmod$-modules underlying mixed Hodge modules is that they come endowed
with a (Hodge) filtration, in this case $F_k \shO_X (*D)$ with $k \ge 0$, better behaved than those on arbitrary filtered $\Dmod$-modules; besides the fundamental \cite{Saito-MHP}, \cite{Saito-MHM}, 
see also \cite{Schnell1} for an introductory survey, and \cite{SS} for details.

While the $\Dmod$-module $\shO_X (*D)$ is easy to understand, the Hodge filtration can be extremely complicated to describe. This is intimately linked to understanding the singularities of $D$ and the Deligne Hodge filtration on the singular cohomology $H^{\bullet} (U, \CC)$. 
Saito \cite{Saito-B}, \cite{Saito-HF} studied $F_k \shO_X(*D)$ with the help of the $V$-filtration, and established the 
following results:

\begin{theorem}\label{Saito1}
The following hold:
\begin{enumerate}
\item \cite[Proposition~0.9 and Theorem~0.11]{Saito-B} ~~The Hodge filtration is contained in the pole order filtration, namely
$$
F_k \shO_X(*D) \subseteq \shO_X \big( (k+1) D\big) \,\,\,\,\,\, {\rm for ~all} \,\,\,\, k \ge 0,
$$
and equality holds if $k \le \tilde{\alpha}_f - 1$.

\medskip

\item \cite[Theorem~0.4]{Saito-HF} ~~ $F_0 \shO_X(*D)  = \shO_X (D)\otimes_{\shO_X} V^1 \shO_X$.\footnote{This is in fact proved in \emph{loc. cit.} with 
$\widetilde{V}^1 \shO_X$, the microlocal $V$-filtration on $\shO_X$ (see \S\ref{Vfiltration}), instead of $V^1 \shO_X$, but it can be shown that the two coincide for $V^1$.}
\end{enumerate}
\end{theorem}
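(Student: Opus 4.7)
The plan is to study $F_\bullet \shO_X(*D)$ by passing, via the graph embedding $i_f \colon X \hookrightarrow Y = X \times \CC$, to the filtered $\Dmod_Y$-module $\shB_f = (i_f)_+ \shO_X$, where the Hodge filtration is combinatorially simple and the V-filtration along $t = 0$ is accessible. The decisive input from Saito's theory is the strict compatibility of the Hodge filtration with the V-filtration on any mixed Hodge module.

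Since $\shO_X$ carries the trivial Hodge filtration ($F_p \shO_X = \shO_X$ for $p \ge 0$, zero otherwise), the convolution formula recalled before the theorem gives
$$F_p \shB_f = \bigoplus_{i=0}^p \shO_X \otimes \partial_t^i.$$
The $\Dmod_X$-module $\shO_X(*D)$ sits inside $\shB_f$ through the natural $\Dmod_X$-linear map, with induced Hodge filtration coming from $F_\bullet \shB_f$. For the inclusion in (1), I would use strictness to lift any section of $F_p \shO_X(*D)$ to an element $\sum_{i=0}^p g_i \otimes \partial_t^i$ of $F_p \shB_f$, then repeatedly apply the relation $t \cdot (g \otimes \partial_t^i) = f g \otimes \partial_t^i - i g \otimes \partial_t^{i-1}$ from the $\Dmod_Y$-action to eliminate the terms with $i \ge 1$. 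This shows that any such representative can be rewritten as $g/f^{p+1}$ with $g \in \shO_X$, giving $F_p \shO_X(*D) \subseteq \shO_X\bigl((p+1)D\bigr)$.

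For the equality in (1) and for (2), I would bring in the microlocal V-filtration $\tilde{V}^\bullet$ on $\shB_f$ and its restriction to $\shO_X$, whose jumps are precisely the roots of $\tilde{b}_f(s)$. Saito's detailed analysis produces a formula of the shape
$$F_p \shO_X(*D) = \shO_X\bigl((p+1)D\bigr) \otimes_{\shO_X} \tilde{V}^{p+1} \shO_X.$$
Specializing to $p = 0$ and invoking the footnote that $\tilde{V}^1 \shO_X = V^1 \shO_X$ gives (2) at once. When $p + 1 \le \tilde{\alpha}_f$, the factor $\tilde{V}^{p+1} \shO_X$ equals $\shO_X$ by definition of $\tilde{\alpha}_f$, which yields the equality in (1).

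The main obstacle will be justifying the displayed formula. This requires Saito's strictness theorem applied to the pair $(F_\bullet \shB_f, \tilde{V}^\bullet \shB_f)$, together with an identification of the relevant graded pieces of the microlocal V-filtration with nearby cycles of the mixed Hodge module $j_* \QQ_U^H[n]$; once this is in place, every other step in the proof is formal manipulation inside the graph embedding.
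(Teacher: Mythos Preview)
The paper does not give its own proof of this theorem; it is quoted from Saito with citation only, so there is nothing in the paper to compare your sketch against beyond the surrounding discussion.

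That said, your proposal has a genuine gap. The displayed formula
\[
F_p \shO_X(*D) = \shO_X\bigl((p+1)D\bigr) \otimes_{\shO_X} \tilde{V}^{p+1} \shO_X
\]
on which you base both the equality in (1) and part (2) is \emph{false} for $p \ge 2$. In the language used later in the paper, this would assert $I_p(D) = \tilde{I}_p(D)$ as ideals of $\shO_X$, whereas Saito's comparison (recorded as \theoremref{saito}) gives this only modulo $f$, and Remark~\ref{calculations} exhibits an explicit counterexample at $p = 2$ (the elliptic cone $x^3 + y^3 + z^3 = 0$, where an element of $I_2(D)$ lies outside $\tilde{I}_2(D)$). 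So this is not merely a step that is hard to justify; the formula does not hold. A secondary issue: your claim that ``$\shO_X(*D)$ sits inside $\shB_f$'' has the inclusion backwards, since $\shB_f = (i_f)_+ \shO_X$ embeds into $(i_f)_+ \shO_X(*D)$, not conversely; recovering $F_p \shO_X(*D)$ from $\shB_f$ requires Saito's formula relating the Hodge filtration on the localization to $F_\bullet$ and $V^\bullet$ on $\shB_f$, which is more than the manipulation you describe.

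Your outline can be repaired. The formula \emph{does} hold for $p = 0$ (this is exactly the content of (2), and is why \theoremref{saito} needs no ``mod $f$'' there), so that deduction survives. For the equality in (1), the mod-$f$ version suffices once combined with an induction: if $I_{p-1}(D) = \shO_X$ then $(f) \subseteq I_p(D)$ (this is noted just before the second corollary in \S\ref{Vfiltration}), and together with $I_p(D) + (f) = \tilde{I}_p(D) + (f) = \shO_X$ (from $p + 1 \le \tilde{\alpha}_f$) one gets $I_p(D) = \shO_X$. This is essentially the content of \corollaryref{mlc} in the reduced case.
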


Item (1) in the theorem leads to defining for each $k \ge 0$ a coherent sheaf of ideals $I_k (D)$ by the formula
$$F_k \shO_X(*D) =  \shO_X \big( (k+1) D\big) \otimes I_k (D).$$
We call these the \emph{Hodge ideals} of the divisor $D$; they, and especially their extensions to $\QQ$-divisors, play the leading role in this note.

\subsection{Review of Hodge ideals for reduced divisors}\label{reduced}
The papers  \cite{MP1} and \cite{MP2} are devoted to the study of Hodge ideals of reduced divisors, using both
properties coming from the theory of mixed Hodge modules, and an alternative approach based on log resolutions 
and methods from birational geometry. 

The theory is essentially complete in this case, and I will only briefly review it in this section (see also \cite[Ch.~F]{Popa2} for a more extensive survey) and in \S\ref{Vfiltration}, where the relationship with the microlocal $V$-filtration \cite{Saito-MLCT} is explained. The rest of the paper discusses the more general case of $\QQ$-divisors, where a complete treatment is only beginning to take shape.

One may loosely summarize the main properties and results as follows:

\begin{theorem}[\cite{MP1}, \cite{MP2}]\label{ideals_reduced}
Given a reduced effective divisor $D$ on a smooth complex variety $X$, the sequence of Hodge ideals $I_k (D)$,
with $k \ge 0$, satisfies:

\noindent
(i) $I_0 (D)$ is the multiplier ideal $\I \big((1 - \epsilon)D\big)$,\footnote{Note that this follows already by combining Theorem \ref{Saito1}(2) and Theorem \ref{BS} above.} so in particular $I_0 (D) = \shO_X$ if and only if the pair $(X, D)$ is log canonical. 
Moreover, there are inclusions
$$\cdots I_k (D) \subseteq \cdots \subseteq I_1(D) \subseteq I_0 (D).$$

\noindent
(ii) When $D$ has simple normal crossings, in a neighborhood where it is given by $x_1\cdots x_r =0$,  $I_k (D)$ is generated by 
$\{x_1^{a_1}\cdots x_r^{a_r} ~|~0 \le a_i \le k, ~ \sum_i a_i = k(r-1) \}$.

\noindent
(iii) $D$ is smooth if and only if $I_k (D) = \shO_X$ for all $k$; cf. also Corollary \ref{smoothness_criterion} below.

\noindent
(iv)  If $I_k (D) = \shO_X$  for some $k\ge 1$ ($\iff I_1 (D) = \shO_X$), then $D$ is normal with rational singularities.  More precisely, $I_1 (D) \subseteq {\rm Adj}(D)$, the adjoint ideal of $D$.\footnote{Recall that $D$ is normal with rational singularities if and only if 
${\rm Adj} (D) = \shO_X$, see \cite[Proposition~9.3.48]{Lazarsfeld}.}

\noindent
(v) There are non-triviality criteria for $I_k (D)$ at a point $x \in D$ in terms of the multiplicity of $D$ at $x$; cf. e.g. Theorem
\ref{nontriviality_reduced} below.

\noindent
(vi) On smooth projective varieties, $I_k(D)$ satisfy a vanishing theorem extending Nadel Vanishing for multiplier ideals 
(a special case of Theorem \ref{vanishing_Hodge_ideals} below).

\noindent
(vii) If $H$ is a smooth divisor in $X$ such that $D|_H$ is reduced, then $I_k (D)$ satisfy 
$$I_k (D|_H) \subseteq I_k(D) \cdot \shO_H,$$
with equality when $H$ is general. 

\noindent
(viii) If $D_1$ and $D_2$ are reduced divisors such that $D_1 +  D_2$ is also reduced, $I_k$ satisfy  the subadditivity property
$$I_k (D_1 + D_2) \subseteq I_k (D_1)\cdot I_k (D_2).$$

\noindent
(ix) If $ X \to T$ is a smooth family with a section $s\colon T \to X$, and $D$ is a relative divisor on $X$ such that 
the restriction $D_t = D|_{X_t}$ to each fiber is reduced, then 
$$\{ t \in T ~|~ I_k (D_t) \subseteq \frak{m}_{s(t)}^q\}$$
is an open subset of $T$, for each $q \ge 1$.

\noindent
(x) $I_k(D)$ determine Deligne's Hodge filtration on the singular cohomology $H^\bullet (U, \CC)$, where $U = X \smallsetminus D$, via a Hodge-to-de Rham type spectral sequence.
\end{theorem}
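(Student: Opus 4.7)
The plan is to establish each item either via a direct computation using the mixed Hodge module structure on $j_\ast \QQ_U^H \decal{n}$, or via a birational reduction to the SNC case through a log resolution $f \colon Y \to X$. The two approaches are complementary: the first gives the Hodge-theoretic input (the filtration itself, strictness, restriction compatibilities), and the second gives the explicit sheaf-theoretic control needed for non-triviality, inclusion, and vanishing statements.

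First I would prove (ii), which is the geometric foundation. When $D$ has simple normal crossings, the filtered $\Dmod$-module underlying $j_\ast \QQ_U^H \decal{n}$ is computable in local coordinates using the logarithmic de Rham complex $\Omega_X^\bullet(\log D)$ together with Deligne's canonical extension, and a direct calculation produces the monomial generators $x_1^{a_1}\cdots x_r^{a_r}$ with $\sum a_i = k(r-1)$ and $0 \le a_i \le k$. Item (i) is then immediate: $I_0(D)$ is identified with the multiplier ideal by combining \theoremref{Saito1}(2) with \theoremref{BS}, and the decreasing chain $I_{k+1}(D) \subseteq I_k(D)$ follows because multiplication by a local defining function $f$ (i.e., the action of $t$ on $\Mmod_f$) shifts the Hodge filtration so that $f \cdot F_{k+1}\shO_X(\ast D) \subseteq F_k \shO_X(\ast D)$, which, after dividing out the pole shift, gives precisely the claimed containment of ideals.

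Next I would establish the birational formula that governs (iii)--(ix). For a log resolution $f \colon Y \to X$ with $E = (f^\ast D)_{\rm red}$ simple normal crossing, strictness of Saito's filtered direct image identifies $F_k \shO_X(\ast D)$ with the image in $\shO_X((k+1)D)$ of the degree-zero cohomology of an explicit filtered complex on $Y$, built from $\Omega_Y^\bullet(\log E)$ and the numerical data $(a_i,b_i)$ of \eqref{gamma}. This formula is the workhorse: (iii) comes from inspecting the image and using (v); the adjoint inclusion in (iv) is read off directly, and normality and rationality under $I_1(D) = \shO_X$ then follow from the characterization ${\rm Adj}(D) = \shO_X \iff D$ normal with rational singularities. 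Multiplicity bounds (v) come from tracking the vanishing orders of these generators along an exceptional divisor lying over a singular point $x$; subadditivity (viii) follows from comparing the two individual log resolutions via a common refinement; and restriction (vii) is obtained by choosing a log resolution compatible with $H$ and analyzing the base change, with equality in the general case coming from transversality.

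The remaining global and family-theoretic statements are then handled uniformly. Vanishing (vi) is deduced from Saito's vanishing theorem applied to an appropriate twist of $j_\ast \QQ_U^H \decal{n}$, extracting the $I_k(D)$-component from the filtered de Rham complex; the argument parallels the classical derivation of Nadel vanishing from Kawamata--Viehweg on a log resolution. Semicontinuity (ix) follows from cohomology-and-base-change applied to the birational formula fiberwise. Finally, (x) is a consequence of the fact that the filtered de Rham complex of $j_\ast \QQ_U^H \decal{n}$ computes $H^\bullet(U,\CC)$ with Deligne's Hodge filtration, and $E_1$-degeneration for mixed Hodge modules lets one identify the filtered pieces with terms assembled from the $I_k(D)$. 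The main obstacle I expect is producing and verifying the birational formula for $F_k\shO_X(\ast D)$: building the correct filtered complex on $Y$, checking strictness of its pushforward, and identifying the image with $F_k\shO_X(\ast D)$ without spurious correction terms. Once that computation is in hand, most of the corollaries either are formal or reduce to arguments familiar from the theory of multiplier ideals.
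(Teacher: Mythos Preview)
This theorem is stated in the survey without proof, as a summary of results from \cite{MP1} and \cite{MP2}; the paper only adds afterward that (ii) and (x) ``simply follow from standard results, via general properties of the Hodge filtration,'' while the remaining items are more involved. Your outline is broadly aligned with the strategy in those papers and with the machinery the survey itself develops later: the filtered log-resolution complex of \theoremref{formula_log_resolution} (your ``birational formula''), strictness of the filtered direct image, and Saito's Kodaira-type vanishing for (vi).

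Two points in your sketch diverge from the actual arguments and are worth correcting. For (vii), the restriction inequality is not obtained by choosing a log resolution compatible with $H$ and running base change; in \cite{MP2} it comes from the functoriality of mixed Hodge modules under restriction to a smooth hypersurface (via the compatibility of the Hodge and $V$-filtrations in Saito's theory), which controls how $F_\bullet\shO_X(*D)$ behaves under pullback to $H$. For (viii), comparing two log resolutions through a common refinement does not by itself yield the product inclusion $I_k(D_1+D_2)\subseteq I_k(D_1)\cdot I_k(D_2)$; the proof in \cite{MP2} instead imitates the Demailly--Ein--Lazarsfeld argument for multiplier ideals, applying the restriction theorem (vii) to the diagonal $X\hookrightarrow X\times X$ with the divisor $\pr_1^*D_1 + \pr_2^*D_2$. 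These two steps are the genuinely new Hodge-module inputs beyond the birational formula; the rest of your plan (including the argument $f\cdot F_{k+1}\shO_X(*D)\subseteq F_k\shO_X(*D)$ for the chain in (i), which is exactly how \cite{MP1} proceeds after reducing to the SNC case) is accurate.
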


Note that, in view of item $(i)$, a number of these properties are inspired by well-known properties of multiplier ideals (see \cite[Ch.~9]{Lazarsfeld}), though often the proofs become substantially more involved. However $(ii)$ and $(x)$ simply follow from standard results, via general properties of the Hodge filtration.

Another line of results proved in \cite{MP1} and \cite{MOP} regards the complexity of the Hodge filtration. According to \cite{Saito-HF},  one says that the filtration on a $\Dmod$-module $(\Mmod, F_\bullet)$  is \emph{generated at level $k$} if 
$$F_{\ell} \Dmod_X \cdot F_k \Mmod=F_{k+\ell}\Mmod\quad\text{for all}\quad \ell\geq 0.$$
The smallest integer $k$ with this property is called the \emph{generating level}. In the case of $\Mmod = \shO_X(*D)$ with the Hodge filtration, this can be reinterpreted as saying that 
\begin{equation}\label{ideal_determined}
\shO_X \big((k+\ell + 1)D\big) \otimes I_{k + \ell} (D) = F_{\ell} \Dmod_X \cdot \big(\shO_X \big((k+1)D\big) 
\otimes I_{k} (D)\big),
\end{equation}
so all higher Hodge ideals are determined by $I_k (D)$. Thus this invariant is important for concrete calculations; see e.g. 
Remark \ref{calculations} below.

\begin{theorem}\label{generation_level}
Assume that $X$ has dimension $n$. Then:
\begin{enumerate}
\item \cite[Theorem~B]{MP1} The Hodge filtration on $\shO_X (*D)$ is generated at level $n -2$, and this bound is optimal in general.
\item \cite[Theorem~E]{MOP} If $D$ has only isolated rational singularities and $n \ge 3$, 
then the Hodge filtration on $ \shO_X (*D)$ is generated at level $n -3$.
\end{enumerate}
\end{theorem}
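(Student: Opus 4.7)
The plan is to reduce to the simple normal crossings case via a log resolution $f \colon Y \to X$ of $(X,D)$, with $E = (f^{-1}D)_{\rm red}$ a simple normal crossings divisor, and then to transfer the generation statement through Saito's filtered direct image. By compatibility with the open embedding $j \colon U \into X$ (where $U = X \smallsetminus D$) and strictness of $f_+$ for the Hodge filtration, one has an isomorphism of filtered $\Dmod_X$-modules
$$\bigl(\shO_X(*D), F_\bullet\bigr) \cong \cohH^0 f_+ \bigl(\shO_Y(*E), F_\bullet\bigr),$$
and this is computed by an explicit filtered complex built from the logarithmic de Rham complex $\Omega_Y^\bullet(\log E)$ and the transfer bimodule, whose nontrivial pieces sit in degrees $0, 1, \ldots, n-1$.

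In the SNC case, the explicit description of $I_k(E)$ in item $(ii)$ of \theoremref{ideals_reduced} gives at once that the Hodge filtration on $\shO_Y(*E)$ is generated at level $0$, that is, $F_k \shO_Y(*E) = F_k \Dmod_Y \cdot \shO_Y(E)$ for every $k \ge 0$ (a small local calculation with monomials $1/(x_1^{b_1}\cdots x_r^{b_r})$ and derivatives thereof). For part (1), I would then analyze the filtered direct image complex: the key claim is that the contributions from logarithmic forms of degrees $p \le n-2$ already suffice to surject onto $F_k \shO_X(*D)$ for all $k \gg 0$, so that the top logarithmic contribution in degree $n-1$ is absorbed by the $\Dmod_X$-action on lower-degree pieces. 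Using Saito's strictness, this reduces to the surjectivity of a single connecting map in the spectral sequence for $\derR f_\ast$. Optimality should follow from a direct computation on a hypersurface with a sufficiently bad isolated singularity --- for instance a cone over a smooth projective variety of general type in $\PP^n$ --- where one exhibits a class in $F_{n-2}\shO_X(*D)$ not lying in the $\Dmod_X$-orbit of $F_{n-3}\shO_X(*D)$.

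For part (2), the hypothesis of isolated rational singularities produces extra cohomological vanishing on $Y$ via Grauert--Riemenschneider and its logarithmic variants, namely the vanishing of $R^i f_\ast \Omega_Y^p(\log E)$ in a wider range of $(i,p)$ than in (1). This extra vanishing trims one more top-degree term from the filtered direct image complex, so that $F_{n-3}\shO_X(*D)$ already generates the filtration; the condition $n \ge 3$ is used precisely to keep $n-3$ nonnegative.

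The hard part is the bookkeeping inside the filtered direct image: one has to match the differentials in the spectral sequence for $f_+$ with both the exterior derivative on $Y$ and the $\Dmod_X$-action on $X$, and then leverage Saito's strictness to translate cohomological vanishing on $\Omega_Y^\bullet(\log E)$ into surjectivity statements. In the rational-singularities setting, the extra Grauert--Riemenschneider input is precisely what removes the obstruction that otherwise forces the bound $n-2$ rather than $n-3$.
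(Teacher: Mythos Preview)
This is a survey paper, and no proof of the theorem is given here; the statement is simply quoted with citations to \cite{MP1} and \cite{MOP}. There is therefore nothing in the present paper to compare your proposal against.

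For what it is worth, your outline is close in spirit to the arguments in those references. One does reduce to the simple normal crossings case via a log resolution, use that the Hodge filtration on $\shO_Y(*E)$ is generated at level $0$, and push forward using Saito's strictness for the filtered direct image. The improvement from the obvious bound $n-1$ (coming from the length of the filtered log--de Rham complex) to $n-2$, and then to $n-3$ under the rational-singularities hypothesis, does indeed come from vanishing theorems for higher direct images of sheaves of logarithmic forms on the resolution. Where your sketch is looser than the actual proofs: the mechanism in \cite{MP1} passes through Saito's criterion equating the generation level of $F_\bullet \Mmod$ with the acyclicity of the graded pieces $\gr^F_p \DR(\Mmod)$; via the log-resolution description (the reduced case of \theoremref{formula_log_resolution}) these become shifted copies of $\derR f_\ast \Omega_Y^j(\log E)$, and the bound $n-2$ is then a concrete vanishing statement for a specific $R^i f_\ast$, not a ``single connecting map in a spectral sequence''. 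Optimality in \cite{MP1} is exhibited on the cone over a smooth projective hypersurface of sufficiently high degree (an ordinary singularity), which is essentially the example you guessed.
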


We conjecture in \cite{MOP} that (2) should hold for arbitrary divisors with rational singularities. Its converse is known not to hold in general.
When $D$ has an isolated quasihomogeneous singularity, a stronger bound was given by Saito in \cite[Theorem~0.7]{Saito-HF}: the generating level of $F_\bullet \shO_X(*D)$ is $[n - \tilde{\alpha}_f - 1]$, 
where $\tilde{\alpha}_f$ is the microlocal log
canonical threshold defined in \S\ref{general_Vfiltration}; cf. also Theorem \ref{Saito_rational}(1).

\begin{example}
(1) If $D$ is a reduced divisor on a surface, then the Hodge filtration is generated at level $0$,  so the multiplier ideal $I_0 (D)$ 
determines all other $I_k (D)$ via formula ($\ref{ideal_determined}$) for $k = 0$. See \cite[Example~13.1]{Popa2} for concrete 
calculations.

(2) If $D = (f=0) \subset X = \CC^3$ is a  du Val surface singularity, then $I_0 (D) = \shO_X$, and since $D$ has rational 
singularities, the Hodge filtration is again generated at level $0$. Thus for all $k\ge 1$ we have 
$I_k (D) = f^{k+1} \cdot \big(F_k \Dmod_X  \cdot \frac{1}{f}\big)$.
If however $D$ is an elliptic singularity, then the Hodge filtration is typically not generated at 
level $0$ any more, but only at level $1$. See for instance the elliptic cone calculation in Remark \ref{calculations}.
\end{example}

\noindent 
{\bf Some first applications.}
The use of Hodge ideals in geometric applications is still in its early days. There are however a number of basic consequences 
that can already be deduced using the results above:

\noindent
$\bullet$~~Effective bounds for the degrees of hypersurfaces on which isolated singular points on a reduced hypersurface $D$ in 
$\PP^n$ of fixed degree $d$ impose independent conditions, in the style of a classical a result of Severi for nodal surfaces in $\PP^3$; 
see \cite[\S27]{MP1}. As an example, the isolated singular points on $D$ of multiplicity $m\ge 2$ impose independent conditions on 
hypersurfaces of degree $([\frac{n}{m}]+1)d - n -1$.

\noindent
$\bullet$~~Solution to a conjecture on the multiplicities of points on theta divisors with isolated singularities on principally polarized abelian 
varieties, improving in this case well-known results of Koll\'ar and others; see \cite[\S29]{MP1}. For instance, one shows that every point
on such a theta divisor has multiplicity at most $\frac{g+1}{2}$, where $g$ is the dimension of the abelian variety.

\noindent
$\bullet$~~Effective bound for how far the Hodge filtration coincides with the pole order filtration on the 
cohomology $H^\bullet (U, \CC)$ of the complement $U = X \smallsetminus D$, in the style of results of Deligne, Dimca, Saito and others.
For instance, if $D$ is a divisor with only ordinary singularities of multiplicity $m \ge 2$ in an $n$-dimensional $X$, then  
$$F_p H^\bullet (U, \CC) = P_p H^\bullet (U, \CC) \,\,\,\,\,\,{\rm for ~all} \,\,\,\,\,\, p \le \left[\frac{n}{m}\right] - n - 1.$$
(The two filtrations on $H^\bullet (U, \CC)$ start in degree $-n$.) See \cite[Theorem~D]{MP1}.

Space constraints do not allow me to explain all of this carefully. I will however focus in detail on the second item, and in fact on an extension to pluri-theta divisors in \S\ref{scn:pluritheta}, in order to see the machinery in action.

\subsection{Hodge ideals for arbitrary $\QQ$-divisors}\label{scn:Q-divs}
The case of arbitrary $\QQ$-divisors is treated in \cite{MP3}. It requires a somewhat more technical setting, where the $\Dmod$-modules we consider are only direct summands 
of $\Dmod$-modules underlying mixed Hodge modules. The initial setup can be seen as a $\Dmod$-module analogue of eigensheaf decompositions in the theory of cyclic covering constructions, see e.g. \cite[\S3]{EV}.

Let $D$ be an effective $\QQ$-divisor on $X$, with support $Z$.  We denote $U=X\smallsetminus Z$ and let $j\colon U\hookrightarrow X$ 
be the inclusion map. Locally we can assume that $D=\alpha\cdot {\rm div}(h)$ for some nonzero $h\in \shO_X(X)$ and $\alpha\in\QQ_{>0}$.  We denote $\beta = 1 - \alpha$. 

To this data one associates by a well-known construction the left $\Dmod_X$-module 
$\Mmod(h^{\beta}): = \shO_X (*Z)h^\beta$, a rank $1$ free $\shO_X(*Z)$-module 
with generator the symbol $h^{\beta}$, on which a derivation $D$ of $\shO_X$ acts via the rule
$$D(wh^{\beta}) :=\left(D(w) +  w\frac{\beta\cdot D(h)}{h}\right)h^{\beta}.$$
The case $\beta = 0$  is the localization $\shO_X(*Z)$ considered  in \S\ref{HF}.

This $\Dmod_X$-module does not necessarily itself underlie a Hodge module. It is however a filtered direct summand of one such, 
via the following construction. Let $\ell$ be an integer such that $\ell \beta \in \ZZ$, and consider the finite \'etale map 
$$p \colon V  = {\bf Spec}~ \shO_U [y]/(y^\ell - h^{\ell \beta}) \longrightarrow U.$$
Consider also the cover 
$$q \colon W  = {\bf Spec} ~\shO_X [z]/(z^\ell - h^{\ell \beta}) \longrightarrow X,$$
and a log-resolution $\varphi \colon Y \to W$ of the pair $(W, q^*Z)$ that is an isomorphism over $V$ and is equivariant with respect to 
the natural  $\ZZ/ \ell \ZZ$ action. This all fits in a commutative diagram
$$
\begin{tikzcd}
& Y \dar{\varphi} \arrow[bend left=50]{dd}{g}\\
V \rar\dar{p} & W\dar{q} \\
U\rar{j} & X,
\end{tikzcd}
$$
where the bottom square is Cartesian. Denote by $E$ the support of $g^{-1}(Z)$.

\begin{lemma}\cite{MP3}\label{decomposition}
There is an isomorphism of filtered left $\Dmod_X$-modules
$$g_+ \big(\shO_Y(*E), F_\bullet \big) \simeq j_+ p_+ (\shO_V, F_\bullet) \simeq \bigoplus_{i=0}^{\ell-1}\big(\Mmod(h^{i\beta}), 
F_\bullet \big),$$
where the filtration on the left hand side is given by the pushforward of the Hodge filtration in \S\ref{HF} (cf. also Theorem 
\ref{ideals_reduced}(iii)), while on each summand on the right hand side we consider the induced filtration.
\end{lemma}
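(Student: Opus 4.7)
The plan is to establish the two isomorphisms first at the level of underlying $\Dmod_X$-modules, and then verify compatibility with the Hodge filtrations using the functoriality of Saito's pushforward construction.

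For the first isomorphism, the key observation is $\shO_Y(*E) = (j_{Y\smallsetminus E})_+ \shO_{Y\smallsetminus E}$, where $j_{Y\smallsetminus E} \colon Y\smallsetminus E \hookrightarrow Y$ is the open inclusion; this is the standard realization of localization as $\Dmod$-module pushforward. Since $\varphi$ is an isomorphism over $V \subset W$ (and the bottom square in the diagram is Cartesian), the composite $g \circ j_{Y\smallsetminus E}$ factors through the isomorphism $Y \smallsetminus E \xrightarrow{\sim} V$ followed by $j \circ p$. The composition law for filtered $\Dmod$-module pushforward then yields
$$g_+ \shO_Y(*E) = (j \circ p)_+ \shO_V = j_+ p_+ \shO_V,$$
and this identification also respects the Hodge filtrations, since both sides arise by applying Saito's pushforward to the trivial Hodge module on $Y \smallsetminus E \simeq V$.

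Next, I produce the eigensheaf decomposition of $p_+ \shO_V$. The group $\ZZ/\ell\ZZ$ acts on $V$ by $y \mapsto \zeta y$ with $\zeta$ a primitive $\ell$-th root of unity; since $p$ is finite \'etale, $p_+ \shO_V = p_* \shO_V$ is locally free of rank $\ell$ over $\shO_U$ with basis $1, y, \ldots, y^{\ell-1}$. Decomposing into characters of $\ZZ/\ell\ZZ$ gives
$$p_+ \shO_V = \bigoplus_{i=0}^{\ell-1} \shO_U \cdot y^i,$$
and under the identification $y = h^\beta$ valid on $V$, each summand $\shO_U \cdot y^i$ matches $\shO_U h^{i\beta}$ with its natural $\Dmod_U$-module structure. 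Applying $j_+$ (meromorphic extension along $Z$) to each summand gives $j_+(\shO_U h^{i\beta}) = \shO_X(*Z) h^{i\beta} = \Mmod(h^{i\beta})$, producing the claimed decomposition of underlying $\Dmod_X$-modules.

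The main obstacle will be verifying that this decomposition respects the filtrations. To handle it, I would use that the $\ZZ/\ell\ZZ$-action on $V$ extends by the hypothesized equivariance of $\varphi$ to one on $Y$, so by functoriality of Saito's pushforward it induces an action by filtered $\Dmod_X$-module automorphisms on $g_+ \shO_Y(*E)$. Because $\ell$ is invertible in $\CC$, the character decomposition is then automatically a splitting in the filtered category, and the filtration induced on the $i$-th eigensheaf is precisely the one being assigned to $\Mmod(h^{i\beta})$ in the statement. This step is essential: it is what allows one to transfer a canonical Hodge-theoretic filtration from a direct summand of a mixed Hodge module on $Y$ to the rank-one twists $\Mmod(h^{i\beta})$, which on their own need not carry a canonical mixed Hodge module structure.
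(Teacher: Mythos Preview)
The paper is a survey and does not supply its own proof of this lemma; it simply attributes the result to \cite{MP3}. Your argument is correct and follows the natural line one would expect in that reference: realize $\shO_Y(*E)$ as the open pushforward from $Y\smallsetminus E$, use functoriality of pushforward for mixed Hodge modules together with the identification $Y\smallsetminus E \simeq V$ to obtain the first isomorphism, then decompose $p_+\shO_V$ into $\ZZ/\ell\ZZ$-eigenspaces and use the equivariance of the resolution to see that the decomposition is filtered.

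One small point of phrasing: writing ``the identification $y = h^{\beta}$ valid on $V$'' is imprecise, since $h^{\beta}$ is only a formal symbol on $U$. What you actually use (and what your computation implicitly relies on) is that differentiating the relation $y^{\ell} = h^{\ell\beta}$ gives $dy = \beta\,(y/h)\,dh$, so for $g\in\shO_U$ one has $\partial(g\,y^{i}) = \bigl(\partial g + i\beta\,g\,\partial h/h\bigr)y^{i}$, which matches exactly the $\Dmod_U$-action on $\shO_U h^{i\beta}$. With that clarification, the map $g\,y^{i}\mapsto g\,h^{i\beta}$ is visibly a $\Dmod_U$-module isomorphism, and the rest of your argument goes through.
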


For the notation in the lemma, recall that for any proper morphism of smooth varieties $f\colon X \to Y$ there is a 
filtered direct image functor
$$f_+ \colon \DD^b \big({\rm FM}(\Dmod_X)\big) \longrightarrow \DD^b \big({\rm FM}(\Dmod_Y)\big)$$
between the bounded derived categories of filtered $\Dmod$-modules; see \cite[\S2.3]{Saito-MHM}. 

Thus in this theory, the basic (local) object associated to an effective $\QQ$-divisor $D$ as above is the filtered $\Dmod_X$-module
$$\big(\Mmod(h^{\beta}), F_\bullet\big), \,\,\,\,\,\,{\rm with}\,\,\,\,F_k \Mmod (h^{\beta}) \neq 0 \iff k \ge 0,$$
and for most practical purposes this has the same properties as a filtered $\Dmod$-module underlying a mixed Hodge module.

One can show by a direct calculation that when the support $Z$ is \emph{smooth}, itself given by the equation $h$, then 
$$F_k \Mmod(h^{\beta}) = \shO_X \big((k +1 + [\beta]) Z  \big)h^{\beta} \subseteq \shO_X \big((k+1)Z\big)h^{\beta}, \,\,\,\,\,\,{\rm for~all}\,\,\,\,k \ge 0.$$
Using this and standard reduction arguments, it follows that in general (even when $Z$ is not necessarily defined by $h$), 
if $H = {\rm div}(h)$ so that $D = \alpha H$, we have 
$$F_k\Mmod(h^{\beta})\subseteq \shO_X\big(k Z + H\big)h^{\beta}, \,\,\,\,\,\,{\rm for~all}\,\,\,\,k\ge 0.$$

This allows us to formulate the following:

\begin{definition}\label{Hodge_ideals}
For each $k \ge 0$, the \emph{$k$-th Hodge ideal} associated to the $\QQ$-divisor $D$ is defined by 
$$F_k\Mmod(h^{\beta})=  I_k (D) \otimes_{\shO_X} \shO_X\big(kZ + H \big)h^{\beta}.$$
It is standard to check that  the definition of these ideals is independent of the choice of $\alpha$ and $h$, and therefore makes sense globally on $X$.
The reduced case described in \S\ref{HF} and \S\ref{reduced} corresponds to the value $\beta = 0$.
\end{definition}

\noindent
{\bf Assumption.} From now on, for simplicity we assume that $\lceil D \rceil  = Z$ (for instance, $D = \alpha Z$ with $0 < \alpha \le 1$). This makes 
the statements more compact, while the general situation can be reduced to this case by noting that we always have 
$$I_k( D) \simeq I_k (B) \otimes_{\shO_X} \shO_X ( Z - \lceil D \rceil),$$
with $B = Z + D - \lceil D \rceil$.

\medskip

In the rest of this section I will briefly explain the approach to the study of Hodge ideals based on log resolutions, originating in \cite{MP1} in the reduced case, and completed in \cite{MP3} in the general case. In \S\ref{Vfiltration} I will discuss 
the connection with the microlocal 
$V$-filtration discovered by Saito \cite{Saito-MLCT}, and its extension to the twisted case in \cite{MP4}.

Let $f\colon Y\to X$ be a log resolution of the pair $(X, D)$ that is an isomorphism over $U=X\smallsetminus Z$,  
and denote $g=h\circ f\in\shO_Y(Y)$. It is slightly more convenient now to consider equivalently the filtered $\Dmod$-module $\big(\Mmod(h^{-\alpha}), F_\bullet \big)$, with the analogous action of $\Dmod_X$, and 
with a filtered isomorphism given by 
\begin{equation}\label{change}
\Mmod(h^{-\alpha}) \overset{\simeq}{\longrightarrow} \Mmod(h^{\beta}), \,\,\,\,\,\,wh^{-\alpha} \to (wh^{-1})h^{\beta},
\end{equation}
and similarly on $Y$. There is a filtered isomorphism
$$
\big(\Mmod(h^{-\alpha}), F_\bullet \big)\simeq f_+\big(\Mmod(g^{-\alpha}), F_\bullet \big).
$$
We use the notation $G=f^*D$ and $E = {\rm Supp}(G)$, the latter being a simple normal crossing divisor. 
It turns out that there exists a complex on $Y$:
$$C^{\bullet}_{g^{-\alpha}} (-\lceil G\rceil):\,\,0\to\shO_Y(-\lceil G\rceil)\otimes_{\shO_Y} \Dmod_Y\to\shO_Y(-\lceil G\rceil)\otimes_{\shO_Y}\Omega^1_Y(\log E)\otimes_{\shO_Y}\Dmod_Y$$
$$\to\ldots\to\shO_Y(-\lceil G\rceil)\otimes_{\shO_Y}\omega_Y(E)\otimes_{\shO_Y}\Dmod_Y\to 0,$$
which is placed in degrees $-n,\ldots,0$, and such that if $x_1,\ldots,x_n$ are local coordinates, its differential is given by
$$\eta\otimes Q\to d\eta\otimes Q+\sum_{i=1}^n (dx_i\wedge \eta)\otimes\partial_iQ - \alpha \big( {\rm dlog}(g)\wedge\eta\big)\otimes Q.$$
Moreover, this complex has a natural filtration given, for $k\ge 0$, by subcomplexes 
$$F_{k-n}C^{\bullet}_{g^{-\alpha}} (-\lceil G\rceil): = 
0 \rightarrow \shO_Y(-\lceil G\rceil)\otimes  F_{k-n} \Dmod_Y \rightarrow $$
$$\to  \shO_Y(-\lceil G\rceil)\otimes\Omega_Y^1(\log E) \otimes  F_{k-n+1} \Dmod_Y \rightarrow 
\cdots \to  \shO_Y(-\lceil G\rceil)\otimes \omega_Y(E) \otimes  F_k \Dmod_Y\rightarrow 0.$$

\noindent
The key point shown in \emph{loc. cit.} is that there is a filtered quasi-isomorphism
\begin{equation}\label{quasi_SNC}
\big(C^{\bullet}_{g^{-\alpha}} (-\lceil G\rceil), F_\bullet\big) \simeq \big(\Mmod_r (g^{-\alpha}), F_\bullet\big),
\end{equation}
where 
$$\Mmod_r (g^{-\alpha}) : = \Mmod (g^{-\alpha}) \otimes_{\shO_Y} \omega_Y \simeq g^{-\alpha} \omega_Y(*E)$$ 
is the filtered right $\Dmod_Y$-module associated to $\Mmod (g^{-\alpha})$.
In other words, the filtered complex on the left computes the Hodge filtration on $\Mmod_r (g^{-\alpha})$, 
hence the Hodge ideals for the simple normal crossings divisor $E$.

Given this fact, one can use $\big(C^{\bullet}_{g^{-\alpha}} (-\lceil G\rceil), F_\bullet\big)$ as a concrete representative for computing the filtered $\Dmod$-module pushforward of $\big(\Mmod_r (g^{-\alpha}), F_\bullet\big)$, hence for computing 
the ideals $I_k (D)$. If we denote as customary by 
$$\Dmod_{Y\to X} = \shO_Y \otimes_{f^{-1}\shO_X} f^{-1} \Dmod_X$$
the transfer $\Dmod$-module (isomorphic to $f^*\Dmod_X$ as an $\shO_Y$-module), the result is:

\begin{theorem}\cite{MP3}\label{formula_log_resolution}
With the above notation, the following hold:
\begin{enumerate}
\item For every $p\neq 0$ and every $k\in \ZZ$, we have
$$R^pf_*\big(C^{\bullet}_{g^{-\alpha}} (-\lceil G\rceil)\otimes_{\Dmod_Y}\Dmod_{Y\to X}\big)=0\,\,\,\,
{\rm and} \,\,\,\,  R^pf_*F_k\big(C^{\bullet}_{g^{-\alpha}} (-\lceil G\rceil)
\otimes_{\Dmod_Y}\Dmod_{Y\to X}\big)=0.$$
\item  For every $k\in\ZZ$, the natural inclusion induces an injective map
$$ R^0f_*F_k\big(C^{\bullet}_{g^{-\alpha}} (-\lceil G\rceil)\otimes_{\Dmod_Y}\Dmod_{Y\to X}\big)\hookrightarrow R^0f_*\big(C^{\bullet}_{g^{-\alpha}} (-\lceil G\rceil)\otimes_{\Dmod_Y}\Dmod_{Y\to X}\big).$$
\item We have a canonical isomorphism
$$R^0f_*\big(C^{\bullet}_{g^{-\alpha}} (-\lceil G\rceil)\otimes_{\Dmod_Y}\Dmod_{Y\to X}\big)\simeq \Mmod_r(h^{-\alpha})$$ 
that, using $(2)$ and ($\ref{change}$), induces for every $k\in\ZZ$ an isomorphism
$$R^0f_*F_{k-n}\big(C^{\bullet}_{g^{-\alpha}} (-\lceil G\rceil)\otimes_{\Dmod_Y}\Dmod_{Y\to X}\big)\simeq
h^{\beta}\omega_X\big(kZ + H \big)\otimes_{\shO_X}I_k(D) = F_{k-n} \Mmod_r (h^{\beta}).$$
\end{enumerate}
\end{theorem}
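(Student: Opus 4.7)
The plan is to recognize the complex $C^{\bullet}_{g^{-\alpha}}(-\lceil G\rceil) \otimes_{\Dmod_Y}\Dmod_{Y\to X}$ as a concrete filtered model for the $\Dmod$-module direct image $f_+\big(\Mmod_r(g^{-\alpha}),F_\bullet\big)$, and then to derive the three assertions from strictness and cohomological degree-$0$ concentration of this direct image.

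First, since each term of $C^{\bullet}_{g^{-\alpha}}(-\lceil G\rceil)$ is a locally free right $\Dmod_Y$-module, the ordinary tensor product with the transfer module $\Dmod_{Y\to X}$ computes the derived tensor product. Combined with the filtered quasi-isomorphism (\ref{quasi_SNC}), applying $Rf_*$ then yields a filtered complex whose cohomology is $f_+\big(\Mmod_r(g^{-\alpha}),F_\bullet\big)$, both in the filtered and the unfiltered sense.

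Next, I would invoke Saito's theory. By Lemma \ref{decomposition} applied on $Y$, the filtered $\Dmod$-module $\big(\Mmod(g^{-\alpha}),F_\bullet\big)$ is a filtered direct summand of one underlying a pure Hodge module, and this decomposition is respected by the filtered direct image $f_+$, since it comes from a $\ZZ/\ell\ZZ$-eigenspace decomposition. Saito's strictness theorem for proper pushforwards then transfers to the summand, giving: (i) strictness of $f_+$, which is precisely the injectivity asserted in (2) and also the vanishing of $R^p f_* F_k(\cdots)$ for $p\neq 0$ in (1); and (ii) combined with the filtered isomorphism $f_+\big(\Mmod(g^{-\alpha}),F_\bullet\big)\simeq \big(\Mmod(h^{-\alpha}),F_\bullet\big)$ recalled just before the theorem, cohomological degree-$0$ concentration, which yields the remaining unfiltered vanishing in (1) together with the canonical isomorphism at the start of (3).

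The explicit formula at the end of (3) then follows mechanically: one translates from the generator $h^{-\alpha}$ to $h^\beta$ via the filtered isomorphism (\ref{change}) and invokes Definition \ref{Hodge_ideals} of the Hodge ideals, with the shift by $n$ encoding the usual conversion between left and right $\Dmod$-module conventions through tensoring with $\omega_X$. The main obstacle is the invocation of Saito's strictness together with the cohomological degree-$0$ concentration for the summand itself, which ultimately rests on the deep machinery of mixed Hodge modules and the compatibility of the eigenspace decomposition with $f_+$; once these are granted, the chain of identifications leading to the formula for $I_k(D)$ is essentially forced.
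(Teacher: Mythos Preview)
Your proposal is correct and follows essentially the same approach as the paper: the survey itself does not give a proof but indicates precisely this strategy---use the filtered quasi-isomorphism (\ref{quasi_SNC}) to represent $\Mmod_r(g^{-\alpha})$ by the complex $C^{\bullet}_{g^{-\alpha}}(-\lceil G\rceil)$, and then read off (1)--(3) from Saito's strictness for the filtered direct image of (a summand of) a Hodge module together with the filtered isomorphism $f_+\big(\Mmod(g^{-\alpha}),F_\bullet\big)\simeq\big(\Mmod(h^{-\alpha}),F_\bullet\big)$. One small correction: the ambient object produced by Lemma~\ref{decomposition} underlies a \emph{mixed} Hodge module (it is $j_+$ of something), not a pure one; this does not affect the argument, since Saito's strictness theorem for proper direct images holds in the mixed setting as well.
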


\smallskip

\begin{example}[$I_0 (D)$ is a multiplier ideal]\label{I_0}
The lowest term in the filtration on the complex above reduces to the sheaf
$$F_{-n} C^{\bullet}_{g^{-\alpha}} (-\lceil G\rceil)= \omega_Y(E-\lceil f^*D\rceil)$$
in degree $0$. Thus 
$$I_0 (D) = f_*\shO_Y\big(K_{Y/X}+E-\lceil f^*D\rceil\big)=  f_*\shO_Y\big(K_{Y/X}- [ (1 - \epsilon)f^*D]\big).$$
This is by definition the multiplier ideal associated to the $\QQ$-divisor $(1-\epsilon) D$ with $0<\epsilon\ll 1$. Consequently (see  \cite[9.3.9]{Lazarsfeld}):
$$I_0 (D) = \shO_X \iff (X, D) {\rm~~is~log~canonical}.$$
\end{example}

\begin{remark}[Local vanishing]
In view of Theorem \ref{formula_log_resolution}(3) and Example \ref{I_0}, the statement in 
Theorem \ref{formula_log_resolution}(1) can be seen as a generalization of Local Vanishing for multiplier ideals 
\cite[Theorem~9.4.1]{Lazarsfeld}.
\end{remark}

Given the equivalence between the triviality of $I_0 (D)$ and log canonicity, it is natural to introduce the following:

\begin{definition}\label{k-log-can}
We say that the pair $(X, D)$ is $k$-log canonical if 
$$I_0 (D) = \cdots = I_k (D) = \shO_X.$$
Under our running assumption on $D$, Corollary \ref{inclusions} below implies that this is in fact equivalent to 
simply asking that $I_k (D) = \shO_X$.
\end{definition}

\begin{example}
Let $Z$ have an ordinary singularity of multiplicity $m$, i.e. an isolated singular point whose projectivized tangent cone is smooth (for example the cone over a smooth hypersurface of degree $m$ in $\PP^{n-1}$). If $D = \alpha Z$ with $0 < \alpha \le 1$, then $(X, D)$ is $k$-log canonical if and only if $k \le [\frac{n}{m} - \alpha]$. 
See Corollary \ref{mlc}, noting that $\tilde\alpha_f = \frac{n}{m}$ according to \cite{Saito-HF}; cf. also \cite[Theorem~D and Example 20.13]{MP1}.
\end{example}

\begin{example}
Irreducible theta divisors on principally polarized abelian varieties are $0$-log canonical, but may sometimes not be $1$-log canonical; see \cite[Remark~29.3(2)]{MP1}. Generic determinantal hypersurfaces are $1$-log canonical, but they are not 
$2$-log canonical; see \cite[Example~20.14]{MP1}. Both have rational singularities; compare with Theorem \ref{ideals_reduced}(iv).
\end{example}

The generation level of the Hodge filtration on $\Mmod(h^\beta)$ is not so well understood at the moment; for instance, depending on the value of $\alpha$, examples in \cite{MP3} show that on surfaces it can be either $0$ or $1$. It is natural to ask what is the precise analogue of Theorem \ref{generation_level} (note however that we show in \emph{loc.cit.} that the generation level is always at most $n-1$), but also, concretely, whether the analogue of Saito's result discussed 
immediately after it holds:

\begin{question}
If $D = \alpha Z$, with $Z$ reduced and having an isolated quasi-homogeneous singularity, is the generation level of the Hodge filtration on $\Mmod(h^\beta)$ equal to $[n - \tilde\alpha_f - \alpha]$?\footnote{Note added after the final proofs: in the meanwhile, this was verified by M. Zhang \cite{Zhang:Vfiltration}.}
\end{question}

\subsection{(Non)triviality criteria}
The applications of the theory of multiplier ideals rely crucially on effective criteria for understanding whether they are trivial 
or not at a given point. The most basic are as follows; if $D$ is an effective $\QQ$-divisor, then:
\begin{enumerate}
\item If ${\rm mult}_x (D) \ge n = \dim X$, then $\I(D)_x \neq \shO_{X,x}$; see \cite[Proposition~9.3.2]{Lazarsfeld}.
\item If ${\rm mult}_x (D) < 1$, then $\I(D)_x = \shO_{X,x}$; see \cite[Proposition~9.5.13]{Lazarsfeld}.
\end{enumerate}
The first is quite standard, while the second is a slightly more delicate application of inversion of adjunction. 

Multiplier ideals also satisfy a birational transformation formula. If $f \colon Y \to X$ is any proper birational map, 
then 
$$\I (D) \simeq f_* \big(\shO_Y (K_{Y/X}) \otimes_{\shO_Y} \I (f^*D)\big).$$
See \cite[Theorem~9.2.33]{Lazarsfeld}. Such a compact statement is not available for higher Hodge ideals; however, using Theorem \ref{formula_log_resolution}, one can show a partial analogue. 

\begin{theorem}\cite[Theorem~18.1]{MP1}, \cite{MP3}\label{smaller_ideal}
Let $f \colon Y \to X$ be a projective morphism, with $Y$ smooth. Let $Z = D_{\rm red}$, $E = (f^*D)_{\rm red}$,  and denote
$T_{Y/X} =  {\rm Coker}(T_Y \to f^*T_X)$. Then:
\begin{enumerate}
\item There is an inclusion
$$f_*\big(I_k(f^*D)\otimes_{\shO_{Y}}\shO_{Y}(K_{Y/X}+ k(E- f^*Z))\big)\subseteq I_k(D).$$
\item  If $J$ is a coherent ideal on $X$ such that $J\cdot T_{Y /X}=0$, then 
$$J^k\cdot I_k(D)\subseteq f_*\big(I_k(f^*D)\otimes_{\shO_{Y}}\shO_{Y}(K_{Y/X}+ k (E-f^*Z))\big).$$
\end{enumerate}
\end{theorem}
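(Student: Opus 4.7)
The plan is to pass to a common log resolution above $Y$ and exploit the functoriality of the filtered $\Dmod$-module pushforward, together with the strictness of the Hodge filtration. Pick a projective birational morphism $\mu\colon W\to Y$ with $W$ smooth so that $\mu$ is a log resolution of $(Y,f^*D)$ and $\pi:=f\circ\mu\colon W\to X$ is a log resolution of $(X,D)$. Theorem~\ref{formula_log_resolution}(3), applied to each of these (reducing $(Y,f^*D)$ to the running assumption via $B_Y:=E+f^*D-\lceil f^*D\rceil$ if necessary), computes $F_{k-n}\Mmod_r(g_Y^\beta)$ on $Y$ (with $g_Y=h\circ f$) and $F_{k-n}\Mmod_r(h^\beta)$ on $X$ as zeroth direct images of the lowest filtered piece of the \emph{same} complex on $W$. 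Functoriality $\pi_+\simeq f_+\circ\mu_+$ of the filtered pushforward, combined with Theorem~\ref{formula_log_resolution}(1), identifies $\cohH^0 f_+\Mmod_r(g_Y^\beta)\simeq\Mmod_r(h^\beta)$ as filtered $\Dmod_X$-modules, so everything reduces to analysing the filtered pushforward $f_+\Mmod_r(g_Y^\beta)$ directly on $Y$.

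For Part~(1), compute $\Mmod_r(g_Y^\beta)\otimes^{\derL}_{\Dmod_Y}\Dmod_{Y\to X}$ via a Spencer-style resolution whose successive terms carry exterior powers of $T_{Y/X}$. The lowest filtered piece of the zero-degree term, together with the $\omega_{Y/X}$-twist inherent in the left/right conversion of the $\Dmod$-module formalism, produces a canonical map
$$f_*\bigl(F_{k-n}\Mmod_r(g_Y^\beta)\otimes_{\shO_Y}\omega_{Y/X}\bigr)\longrightarrow F_{k-n}\cohH^0 f_+\Mmod_r(g_Y^\beta)=F_{k-n}\Mmod_r(h^\beta),$$
injective by Saito's strictness theorem for projective pushforwards of direct summands of mixed Hodge modules (applicable here by Lemma~\ref{decomposition}). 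Substituting the explicit formula of Definition~\ref{Hodge_ideals} on both sides and applying the projection formula to transfer $f^*\shO_X(H)$ and $f^*\omega_X$ across $f_*$ (cancelling the formal symbol $h^\beta=f_*g_Y^\beta$) yields exactly the desired inclusion $f_*(I_k(f^*D)\otimes\shO_Y(K_{Y/X}+k(E-f^*Z)))\subseteq I_k(D)$.

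For Part~(2), the cokernel of the map above is controlled by the higher-degree terms of the Spencer resolution, which carry factors of $\wedge^i T_{Y/X}$ for $i\ge 1$ and are annihilated by $J$ by hypothesis. The delicate point is filtration-accounting: via the identification $F_p\Dmod_{Y\to X}/F_{p-1}\Dmod_{Y\to X}\simeq f^*\Sym^p T_X$ and its compatibility with the natural map $T_Y\to f^*T_X$, each unit of the convolution filtration on $\Dmod_{Y\to X}$ contributes at most one $T_{Y/X}$-factor to the obstruction at that level. Consequently, at filtered level $F_{k-n}$ at most $k$ such factors can combine, $J^k$ annihilates the total obstruction, and after the same untwisting one obtains $J^k\cdot I_k(D)\subseteq f_*(I_k(f^*D)\otimes\shO_Y(K_{Y/X}+k(E-f^*Z)))$.

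The main technical obstacle is exactly this filtration-accounting step for Part~(2): establishing rigorously that each additional unit of filtration corresponds to at most one more $T_{Y/X}$-factor in the obstruction, so that precisely $J^k$ (and no smaller power) is required. This is typically achieved by an inductive argument on $k$ combined with a local coordinate calculation comparing the order filtration on $\Dmod_Y$ with the convolution filtration on $\Dmod_{Y\to X}$ in the Spencer complex.
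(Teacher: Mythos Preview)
This survey does not actually prove \theoremref{smaller_ideal}; it only cites the result from \cite{MP1} and \cite{MP3}, indicating that the argument goes ``using \theoremref{formula_log_resolution}''. So there is no proof in the paper to compare against line by line.

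That said, your outline is consistent with the approach of the cited sources and with the hint given here. Passing to a common log resolution $W\to Y\to X$, invoking \theoremref{formula_log_resolution} on both levels, and using functoriality and strictness of the filtered $\Dmod$-module pushforward is exactly the right framework. The appearance of $T_{Y/X}$ through the Spencer-type resolution of $\Dmod_{Y\to X}$, and the resulting factor of $J^k$ at filtration level $k$, is also the mechanism used in \cite[\S18]{MP1}.

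One point deserves more care. Your Part~(2) argument is phrased as if the obstruction is governed solely by the terms $\wedge^i T_{Y/X}$ in the Spencer resolution, with ``at most $k$ factors'' appearing at level $F_{k-n}$. In the actual argument one must track simultaneously the filtration shift coming from the graded pieces $f^*\Sym^p T_X$ of $\Dmod_{Y\to X}$ and the exterior powers of $T_{Y/X}$; the inductive step shows that multiplying by $J$ allows one to replace a section of $F_p\Dmod_{Y\to X}$ (modulo $F_{p-1}$) by one coming from $F_p\Dmod_Y$, one degree at a time. Your final paragraph acknowledges this as the ``main technical obstacle'' but stops short of carrying it out. As written, the proposal is a correct high-level sketch rather than a proof: to be complete you would need to make precise the inductive comparison between $F_\bullet\Dmod_Y$ and $F_\bullet\Dmod_{Y\to X}$ modulo $J$, as is done in \cite[Lemma~17.2 and proof of Theorem~18.1]{MP1}.
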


The first statement leads quite quickly to the following triviality criterion, in terms of the coefficients of exceptional divisors on a fixed log resolution.

\begin{corollary}\label{triviality_criterion}
Assume that $D = \alpha Z$ (with $0 < \alpha \le 1$) and for $f\colon Y \to X$ a log resolution of the pair $(X, D)$, define 
$\gamma$ as in  ($\ref{gamma}$). If 
$$ \gamma \ge k + \alpha,$$
then  $I_k (D) = \shO_X$.
\end{corollary}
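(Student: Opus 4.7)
My plan is to deduce the corollary from the birational comparison in Theorem~\ref{smaller_ideal}(1). Since $I_k(D)\subseteq\shO_X$ is automatic, it suffices to establish the reverse inclusion; by Theorem~\ref{smaller_ideal}(1) this reduces to showing that the constant section $1$ lies in
\[
I_k(f^*D)\otimes_{\shO_Y}\shO_Y\bigl(K_{Y/X}+k(E-f^*Z)\bigr)
\]
locally at every point of $Y$, viewed as a fractional-ideal subsheaf of the constant sheaf of rational functions on $Y$.

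First I would make $I_k(f^*D)$ explicit. Since $f$ is a log resolution, $f^*D$ is an SNC $\QQ$-divisor with support $E$, and its Hodge ideal can be computed directly from the filtered complex $C^\bullet_{g^{-\alpha}}(-\lceil G\rceil)$ of Theorem~\ref{formula_log_resolution}, as carried out for SNC $\QQ$-divisors in \cite{MP3}. At a generic point of $F_i$ with $1\le i\le m$, where $f^*D$ equals $\alpha a_i F_i$ on the smooth divisor $F_i$, combining the smooth-support formula $F_k\Mmod(h^\beta)=\shO_X((k+1+[\beta])Z)h^\beta$ from \S\ref{scn:Q-divs} with the reduction identity $I_k(D')\simeq I_k(B)\otimes\shO_Y(Z'-\lceil D'\rceil)$ applied to $D'=\alpha a_i F_i$ yields
\[
I_k(f^*D)_{\mathrm{gen}\,F_i}\;=\;\shO_Y\bigl((1-\lceil\alpha a_i\rceil)F_i\bigr).
\]
Along the proper transform $\tilde Z$ (where $f^*D=\alpha\tilde Z$ with $\alpha\le 1$) and along the purely exceptional $F_j$ with $j>m$ (where $f^*D$ is trivial), the Hodge ideal is $\shO_Y$.

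Next I would translate the desired containment into a divisorial inequality on each component of $E$. The hypothesis $\gamma\ge k+\alpha$ says $b_i+1\ge(k+\alpha)a_i$, and since $b_i+1-ka_i$ is an integer this sharpens to $b_i+1-ka_i\ge\lceil\alpha a_i\rceil$. Consequently, the coefficient of $F_i$ in
\[
K_{Y/X}+k(E-f^*Z)+(1-\lceil\alpha a_i\rceil)F_i
\]
equals $(b_i+1-ka_i)+k-\lceil\alpha a_i\rceil\ge k\ge 0$. The analogous coefficients along $\tilde Z$ and along $F_j$ for $j>m$ are $0$ and $c_j\ge 0$ respectively, so those checks are immediate.

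The main obstacle is the last step: non-negativity on each prime divisor of $E$ does not by itself force the global containment $\shO_Y\subseteq I_k(f^*D)\otimes\shO_Y(K_{Y/X}+k(E-f^*Z))$ at strata where several components of $E$ meet, since $I_k(f^*D)$ need not be principal. To close this gap, one invokes the monomial description of the $k$-th Hodge ideal of an SNC $\QQ$-divisor from \cite{MP3} in local coordinates, and verifies that the per-component inequalities above imply the necessary monomial inequalities at each stratum of $E$. Once this combinatorial verification is settled, $1$ is a section of the sheaf above at every point of $Y$, and hence $I_k(D)=\shO_X$.
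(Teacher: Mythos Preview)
Your approach is exactly the one the paper intends: the corollary is deduced from Theorem~\ref{smaller_ideal}(1) by checking that $1$ lies in $I_k(f^*D)\otimes\shO_Y(K_{Y/X}+k(E-f^*Z))$. Your codimension-one computation and the integrality step $b_i+1-ka_i\ge\lceil\alpha a_i\rceil$ are correct.

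The ``obstacle'' you flag is real but routine once the explicit SNC formula from \cite{MP3} is in hand, and you should not leave it as a black box. After applying the reduction identity, the question becomes whether $\shO_Y(-A)\subseteq I_k(B)$, where $B=E+f^*D-\lceil f^*D\rceil$ has all coefficients in $(0,1]$ and $A$ is the effective divisor with $\mathrm{coeff}_{F_i}(A)=b_i+1+k-ka_i-\lceil\alpha a_i\rceil\ge k$ and $\mathrm{coeff}_{\tilde Z}(A)=0$. In local coordinates at a stratum where the components of $E$ meeting are, say, $\tilde Z=(x_0=0)$ and $F_{i_1},\dots,F_{i_r}$ with equations $x_1,\dots,x_r$, the monomial $x_1^k\cdots x_r^k$ already belongs to $I_k(E_{\mathrm{red}})$ by Theorem~\ref{ideals_reduced}(ii) (it is $x_0^0\cdot\prod_{j\ge 1}x_j^k$, with exponent sum $kr$ and each exponent $\le k$), hence a fortiori to $I_k(B)$, which is at least as large. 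Since $A\ge k\sum_{i=1}^m F_i$ locally, this gives the needed containment at every stratum. With this paragraph in place your argument is complete and matches the paper's.
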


This is a key ingredient in bounding the microlocal log canonical threshold of $D$ in terms discrepancies; see Theorem \ref{MLCT_bound} below.
 
On the other hand, the second statement in Theorem \ref{smaller_ideal} leads to nontriviality criteria that, just as in the case of 
multiplier ideals, are useful when combined with global statements like the vanishing theorem explained in the next section.

\begin{corollary}\label{inclusion_power}
If $x \in X$ is such that ${\rm mult}_x Z=a$ and 
${\rm mult}_x D=b$, and if $q$ is a non-negative integer such that
$$b+ka>q+r+2k-1,$$
then $I_k(D)\subseteq \frak{m}_x^q$. In particular, this happens if ${\rm mult}_WD>\frac{q+r+2k-1}{k+1}$. 
\end{corollary}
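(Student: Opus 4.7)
The plan is to apply Theorem~\ref{smaller_ideal}(2) to the blow-up $f\colon Y\to X$ of a smooth subvariety $W$ of codimension $r$ passing through $x$ (the case $W=\{x\}$ giving $r=n$), with exceptional divisor $F$. On $Y$ one has $K_{Y/X}=(r-1)F$ and $f^*Z=\tilde Z+aF$; the running assumption $\lceil D\rceil=Z$ forces $D$ and $Z$ to share the same reduced support, so that $E:=(f^*D)_{\rm red}=\tilde Z+F$. Consequently $E-f^*Z=-(a-1)F$, and the twist appearing in Theorem~\ref{smaller_ideal}(2) becomes
$$K_{Y/X}+k(E-f^*Z)=\bigl(r-1-k(a-1)\bigr)F.$$
A standard coordinate computation in a chart of the blow-up shows that $I_W\cdot T_{Y/X}=0$, so taking $J=I_W$ yields
$$I_W^k\cdot I_k(D)\;\subseteq\;f_*\Bigl(I_k(f^*D)\otimes\shO_Y\bigl((r-1-k(a-1))F\bigr)\Bigr).$$

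The next step is to control $I_k(f^*D)$ along $F$. At the generic point of $F$, which lies off the strict transform $\tilde Z$, the $\QQ$-divisor $f^*D$ coincides locally with $bF$ on the smooth divisor $F$; the local formula from~\S\ref{scn:Q-divs}, namely $F_k\Mmod(h^{\beta})=\shO_Y\bigl((k+1+[\beta])F\bigr)h^{\beta}$ for a smooth divisor, then gives that $I_k(f^*D)$ has order $\lceil b\rceil-1$ along $F$, whence the global inclusion $I_k(f^*D)\subseteq \shO_Y\bigl(-(\lceil b\rceil-1)F\bigr)$. Combining this with $f_*\shO_Y(-cF)=I_W^c$ for $c\ge 0$ turns the previous display into
$$I_W^k\cdot I_k(D)\;\subseteq\;I_W^{\,k(a-1)+\lceil b\rceil-r}.$$
Because $I_W$ is the ideal sheaf of a smooth subvariety of a smooth variety, localizing at the generic point of $W$ reduces the cancellation to the fact that in a regular local ring $(R,\mathfrak{m})$ the order function is additive, so $\mathfrak{m}^k I\subseteq\mathfrak{m}^N$ forces $I\subseteq\mathfrak{m}^{N-k}$. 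This yields $I_k(D)\subseteq I_W^{\,ka+\lceil b\rceil-r-2k}$. The hypothesis $b+ka>q+r+2k-1$, together with $\lceil b\rceil\ge b$ and the integrality of $ka$, $r$, $k$ and $q$, upgrades to $\lceil b\rceil+ka\ge q+r+2k$, so the exponent is $\ge q$ and $I_k(D)\subseteq I_W^q\subseteq \mathfrak{m}_x^q$. The ``in particular'' clause follows because $\lceil D\rceil=Z$ implies $a\ge b$, so $b+ka\ge(k+1)b>q+r+2k-1$ whenever $b>(q+r+2k-1)/(k+1)$.

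The main obstacle is the sharp bound on $I_k(f^*D)$: $f^*D$ is not SNC without further blow-ups, and its component $F$ can carry coefficient $b>1$. The key observation making this tractable is that order along a prime divisor is determined at the generic point, which reduces the needed inclusion to the one-variable local computation from~\S\ref{scn:Q-divs}. Everything else is bookkeeping: choosing the right birational map, verifying $I_W\cdot T_{Y/X}=0$, and applying the regular-local cancellation at the end.
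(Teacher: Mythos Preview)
Your argument is correct and follows the route indicated in the paper: apply Theorem~\ref{smaller_ideal}(2) to the blow-up of a smooth center $W$ of codimension $r$, use $J=I_W$, compute $I_k(f^*D)$ along the exceptional divisor via the reduction formula $I_k(D')\simeq I_k(B)\otimes\shO(Z'-\lceil D'\rceil)$ together with the smooth-support case, and then cancel the factor $I_W^k$. This is exactly the mechanism in \cite{MP1}, \cite{MP3}. One small remark on the cancellation step: saying that ``localizing at the generic point of $W$'' suffices is correct, but the reason deserves a word---since $W$ is smooth in smooth $X$, the sheaf $\shO_X/I_W^m$ has $\eta_W$ as its unique associated point, so membership in $I_W^m$ is detected at $\eta_W$; equivalently, $I_W^m=f_*\shO_Y(-mF)$ and $\mathrm{ord}_F$ is a valuation centered at $\eta_W$.
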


At least for the moment, one can obtain somewhat stronger statements in the reduced case; the following collects some of the results in \cite{MP1}. The proofs are more involved, (1) relying for instance on a deformation to ordinary singularities argument 
using Theorem \ref{ideals_reduced}(ix), combined with explicit calculations in that case.

\begin{theorem}\label{nontriviality_reduced}
If $x \in D$ is a point on a reduced divisor, with $m = {\rm mult}_x (D)$, then:
\begin{enumerate}
\item $I_k (D) \subseteq \frak{m}_x^q$ if $q = {\rm min}\{m-1,(k+1)m - n\}$; see \cite[Theorem~E]{MP1}.
\item $I_k (D) \subseteq \frak{m}_x^q$ if $m \ge 2 +  \frac{q + n - 2} {k+1}$; see \cite[Corollary~19.4]{MP1}.
\end{enumerate}
\end{theorem}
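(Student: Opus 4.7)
I would handle (1) and (2) separately.

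Statement (2) is the easier one. The plan is to apply the general non-triviality criterion of Corollary \ref{inclusion_power} to the reduced case, where $a = b = m$. Specializing the criterion with the value of the auxiliary parameter $r$ that arises from Theorem \ref{smaller_ideal}(2) applied to the blowup of $x$ with $J = \mathfrak{m}_x$, the hypothesis $b + ka > q + r + 2k - 1$ rearranges to $m \ge 2 + \tfrac{q + n - 2}{k+1}$. In effect, (2) is built into the birational transformation formula of Theorem \ref{smaller_ideal}.

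For the sharper statement (1), the plan has two steps. The first, and the crucial computational one, is the case where $D$ has an \emph{ordinary} singularity of multiplicity $m$ at $x$, i.e., its projectivized tangent cone is smooth. Here the single blowup $f \colon Y \to X$ of $x$ is already a log resolution: $E \cong \PP^{n-1}$ is the exceptional divisor, $f^*Z = \widetilde Z + mE$, $\widetilde Z \cap E$ is a smooth hypersurface of degree $m$ in $E$, and $\widetilde Z + E$ is simple normal crossing. The idea is to combine the explicit description of Hodge ideals on a simple normal crossing divisor in Theorem \ref{ideals_reduced}(ii) with the log resolution formula of Theorem \ref{formula_log_resolution}, using the pushforward identity $f_*\shO_Y(-jE) = \mathfrak{m}_x^j$ (for $j \ge 0$) and $K_{Y/X} = (n-1)E$, to read off explicit generators of $I_k(D)$ in coordinates at $x$. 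The two quantities $m-1$ and $(k+1)m-n$ in the minimum arise respectively from generators controlled by the strict transform $\widetilde Z$ and from generators controlled by the exceptional $E$; the smaller of the two is the correct bound.

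The second step is to upgrade the ordinary case to an arbitrary reduced $D$ of multiplicity $m$ at $x$ via a degeneration. Concretely, one places $D$ in a smooth family of reduced divisors $\{D_t\}$ along a section $s$, with $D = D_0$ and $D_t$ having an ordinary $m$-fold point at $s(t)$ for $t$ generic, which is always possible by perturbing the leading $m$-jet of a local equation for $D$ at $x$. The key tool is then Theorem \ref{ideals_reduced}(ix), asserting that the locus $\{t \in T \mid I_k(D_t) \subseteq \mathfrak{m}_{s(t)}^q\}$ is open. The main obstacle is the direction of this semicontinuity: openness a priori propagates the inclusion \emph{outward} from a fiber where it is already known, whereas one needs to transfer it \emph{inward} to the central fiber $D = D_0$. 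Bridging this gap is the delicate part of the argument — one must either combine (ix) with an additional upper-semicontinuity statement for the colength of $I_k(D_t)$ so that the good locus becomes both open and closed in the family, or else set up the deformation so that $D$ itself plays the role of a generic member to which openness propagates from an ordinary specialization.
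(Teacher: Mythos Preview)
Your plan is the paper's plan: part (2) is indeed the specialization of the birational-transformation estimate (Theorem~\ref{smaller_ideal}/Corollary~\ref{inclusion_power}) to the blow-up of a point with $J=\mathfrak m_x$, and part (1) is exactly the ``compute for an ordinary $m$-fold point, then deform'' argument that the paper sketches.

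The one place where your write-up leaves a genuine gap is the semicontinuity step, and the gap is self-inflicted: you have taken the survey's wording of (ix) literally. The locus $\{t\in T\mid I_k(D_t)\subseteq\mathfrak m_{s(t)}^{q}\}$ is \emph{closed}, not open. This is the direction that follows from the restriction inequality (vii) applied to the total divisor $\mathcal D\subset X\times T$: for general $t$ one has $I_k(D_t)=I_k(\mathcal D)\cdot\shO_{X_t}$, so if the inclusion $I_k(D_t)\subseteq\mathfrak m_{s(t)}^{q}$ holds on a dense set of fibers it forces $I_k(\mathcal D)\subseteq\mathfrak m_{s(T)}^{q}$, and then the inequality $I_k(D_{t_0})\subseteq I_k(\mathcal D)\cdot\shO_{X_{t_0}}$ gives the inclusion at \emph{every} $t_0$. (The $k=0$ case is the usual semicontinuity of multiplier ideals, and already shows ``open'' cannot be right: a cusp degenerating to a smooth curve gives a one-point locus.)

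With the correct direction, your first family---$D$ at the special fiber, the general fiber having an ordinary $m$-fold point obtained by perturbing the degree-$m$ part of the equation---is exactly the right one, and the argument closes immediately. Your alternative suggestion, making $D$ the generic fiber via deformation to the tangent cone, would require the projectivized tangent cone to be smooth, which is not assumed; so that route does not cover the general case and should be discarded.
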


As an example, for $k =1$ the criterion in $(1)$ can be rephrased as
$$m \ge {\rm max} \left\{q + 1, \frac{n+q}{2}\right\} \implies I_1 (D) \subseteq \frak{m}_x^q.$$
It also implies that if $x \in D$ is a \emph{singular} point, then
$$I_k (D) \subseteq \frak{m}_x,\,\,\,\,\,\,{\rm for~all}\,\,\,\,k \ge \frac{n-1}{2}.$$

In particular one obtains a smoothness criterion in terms of the Hodge filtration:

\begin{corollary}\cite[Theorem~A]{MP1}\label{smoothness_criterion}
The divisor $D$ is smooth $\iff$ $I_k (D) = \shO_X$ for all $k$ $\iff$ $I_k (D) = \shO_X$ for some $k \ge  \frac{n-1}{2}$.
\end{corollary}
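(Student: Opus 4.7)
The plan is to prove the three-way equivalence by establishing the circular chain: smoothness $\Rightarrow$ triviality of all $I_k(D)$ $\Rightarrow$ triviality of some $I_k(D)$ with $k \ge (n-1)/2$ $\Rightarrow$ smoothness. The first implication is immediate from the simple normal crossings calculation recorded in \theoremref{ideals_reduced}(ii): around a smooth point of $D$ one has $r = 1$ and $D = (x_1 = 0)$, so the generators described there reduce to $x_1^{a_1}$ with $0 \le a_1 \le k$ and $a_1 = k(r-1) = 0$, forcing $I_k(D) = \shO_X$. The second implication is trivial.

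The content is in the third implication, which I would prove by contrapositive. Assume $D$ is \emph{not} smooth, pick a singular point $x \in D$, and let $m = \mathrm{mult}_x(D) \ge 2$. I would then invoke \theoremref{nontriviality_reduced}(1) with $q = 1$: the required inequality reads $q \le \min\{m-1,(k+1)m - n\}$. Since $m \ge 2$ the first term is at least $1$, while $(k+1)m - n \ge 2(k+1) - n = 2k + 2 - n$, which is at least $1$ precisely when $k \ge (n-1)/2$. Thus for every such $k$ we obtain $I_k(D) \subseteq \mathfrak{m}_x \subsetneq \shO_X$, contradicting the hypothesis that some $k \ge (n-1)/2$ yields $I_k(D) = \shO_X$. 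This is exactly the ``in particular'' observation noted immediately before the corollary, so the proof is essentially a direct unpacking of \theoremref{nontriviality_reduced}.

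The only real obstacle is therefore the nontriviality criterion itself, whose proof (in \cite{MP1}) rests on a deformation to ordinary singularities via \theoremref{ideals_reduced}(ix) plus an explicit model calculation for ordinary singularities; once that input is granted, the corollary falls out in a couple of lines. I would present the argument in the order above so that the easy direction is dispatched first and the reader immediately sees that the new content is the sharp threshold $(n-1)/2$.
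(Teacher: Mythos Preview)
Your proposal is correct and matches the paper's own argument essentially verbatim: the paper deduces the corollary directly from \theoremref{nontriviality_reduced}(1) via the observation (stated just before the corollary) that any singular point $x$ satisfies $I_k(D)\subseteq\mathfrak m_x$ once $k\ge (n-1)/2$, and handles the forward direction by the simple normal crossings computation. There is nothing to add.
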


\subsection{Global setting and vanishing theorem}
While the locally defined ideals in Definition \ref{Hodge_ideals} glue together into a global object, this is not usually the case with 
the $\Dmod$-modules $\Mmod(h^{\beta})$. There is however a setting in which this can be done. 

Namely, assume that  $D=\frac{1}{\ell}H$, where $H$ is an integral divisor and $\ell$ is a positive integer, and that 
there is a line bundle $M$ such that $\shO_X(H)\simeq M^{\otimes \ell}$. (This of course always holds when $D$ is integral.)
Let $s\in \Gamma(X,M^{\otimes \ell})$ be a section whose zero-locus is $H$. 
Recall that $U = X \smallsetminus Z$, and $j \colon U \hookrightarrow X$ is the inclusion.
Since $s$ does not vanish on $U$, we may consider the section $s^{-1}\in\Gamma(U,(M^{-1})^{\otimes \ell})$. Let 
$$p\colon V = {\bf Spec}\big(\shO_X\oplus M\oplus\ldots\oplus M^{\otimes(\ell-1)}\big)\longrightarrow U$$
be the \'{e}tale cyclic cover corresponding to $s^{-1}$.
The filtered $\Dmod_X$-module 
$$(\Mmod, F_\bullet)=j_+p_+ (\shO_V, F_\bullet)$$ 
underlies a mixed Hodge module, and the obvious $\mu_{\ell}$-action on $\Mmod$ induces an eigenspace decomposition
$$(\Mmod, F_\bullet)=\bigoplus_{i=0}^{\ell-1}(\Mmod_i, F_\bullet),$$
where $\Mmod_i$ is the eigenspace corresponding to the map $\lambda\to\lambda^i$, and 
on each $\Mmod_i$ we consider the induced filtration.

On open subsets $W$ on which $M$ is trivialized we have isomorphisms of filtered $\Dmod_W$-modules
$$\Mmod_i\simeq\Mmod(s_{|W}^{-i/\ell})\quad\text{for}\quad 0\leq i\leq \ell-1,$$
which glue to a global isomorphism 
$$\Mmod_i\simeq M^{\otimes i}\otimes_{\shO_X}\shO_X(*Z)=j_*j^*M^{\otimes i}.$$
Twisting in order to globalize the  $\Mmod(h^{\beta})$ picture, with $\beta= 1 - \frac{1}{\ell}$, 
we obtain global coherent ideals given by
$$F_k\Mmod_i \simeq M^{\otimes i} (-H) \otimes_{\shO_X}I_k\left(i/\ell \cdot H\right)\otimes_{\shO_X}\shO_X\big(kZ + H\big),$$
and the Hodge ideals $I_k (D)$ are defined by the case $i = 1$.

In this global setting, there is a vanishing theorem for Hodge ideals that in the case $k =0$ is nothing else but the celebrated 
Nadel vanishing theorem for multiplier ideals. This was shown in \cite[Theorem~F]{MP1} in the reduced case, and in \cite{MP3} in general. Recall that here we are assuming $\lceil D \rceil = Z$, the support of $D$, for simplicity.

\begin{theorem}\label{vanishing_Hodge_ideals}
Assume that $X$ is a smooth projective variety of dimension $n$,  and $D$ is a $\QQ$-divisor  as at the beginning of this section. 
Let $L$ a line bundle on  $X$ such that $L + Z - D$ is ample. For some $k \ge 0$, assume that the pair $(X,D)$ 
is $(k-1)$-log-canonical, i.e. $I_0 (D) = \cdots = I_{k-1} (D) = \shO_X$.
Then we have:

\begin{enumerate}
\item If $k \le n$, and $L (pZ)$ is ample for all $1 \le p \le k$, then 
$$H^i \big(X, \omega_X \otimes L ((k+1)Z)  \otimes I_k (D) \big) = 0$$
for all $i \ge 2$. Moreover, 
$$H^1 \big(X, \omega_X \otimes L ((k+1)Z) \otimes I_k (D) \big) =  0$$
holds if $H^j \big(X, \Omega_X^{n-j} \otimes L ((k - j +1)Z)\big) = 0$ for all $1 \le j \le k$.
\medskip

\item If $k \ge n+1$ and $L ((k+1) Z)$ is ample, then 
$$H^i \big(X, \omega_X \otimes L ((k+1)Z) \otimes I_k (D) \big) = 0 \,\,\,\,\,\, {\rm for ~all}\,\,\,\, i >0.$$

\medskip

\item If $D + pZ$ is ample for $0 \le p \le k-1$, then (1) and (2) also hold with $L = M (- Z)$.
\end{enumerate}
\end{theorem}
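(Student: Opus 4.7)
The plan is to adapt the approach of \cite[Theorem~F]{MP1} by combining Saito's vanishing theorem for mixed Hodge modules with the log-resolution representation of the Hodge filtration from Theorem \ref{formula_log_resolution}. The sheaf to be killed in cohomology is, up to twist by $L\otimes M^{-1}((k+1)Z)$, the bottom piece of the filtration on the right-module version of $\Mmod_1$ from this section, and $\Mmod_1$ is a direct summand of the $\Dmod$-module $\Mmod$ underlying a mixed Hodge module. The ampleness condition ``$L+Z-D$ ample'' is engineered so that, after accounting for the shift coming from $D = \tfrac{1}{\ell}H$ and the $M$-twist, the line bundle $L$ provides a genuine ample twist in the sense required by Saito vanishing for $\Mmod$.

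The first step is to introduce the filtered de Rham complex
\begin{equation*}
F_p \DR(\Mmod_1) = \bigl[ F_{p-n}\Mmod_1 \to \Omega_X^1 \otimes F_{p-n+1}\Mmod_1 \to \cdots \to \omega_X \otimes F_p\Mmod_1 \bigr],
\end{equation*}
the short exact sequences of complexes
\begin{equation*}
0 \to F_{p-1}\DR(\Mmod_1) \to F_p\DR(\Mmod_1) \to \gr^F_p \DR(\Mmod_1) \to 0,
\end{equation*}
and Saito vanishing applied to $\Mmod$ and any ample line bundle $A$:
\begin{equation*}
H^i\bigl(X, \gr^F_p \DR(\Mmod_1) \otimes A\bigr) = 0 \quad \text{for all } i > 0, \text{ all } p \in \ZZ.
\end{equation*}
The $(k-1)$-log-canonical hypothesis is then exploited: by Definition \ref{k-log-can} we have $I_0(D) = \cdots = I_{k-1}(D) = \shO_X$, so for $p<k$ the sheaves $F_p\Mmod_1$ reduce to the line bundles $M\otimes\shO_X(pZ)$, while $F_k\Mmod_1 = M\otimes\shO_X(kZ)\otimes I_k(D)$. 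Consequently each $\gr^F_p\DR(\Mmod_1)$ for $p\le k$ is a Koszul-type complex with explicit line-bundle terms $\Omega_X^{n+j}\otimes M\otimes\shO_X((p+j)Z)$, except that for $p=k$ the rightmost term carries the extra factor $I_k(D)$.

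Twisting the filtration exact sequences by $L((k+1)Z)$ and running long exact sequences in hypercohomology, the contributions of $\gr^F_p$ for $p<k$ vanish in positive degrees by Saito vanishing as soon as the shifted twists $L((k+1-p+j)Z)$ are ample for the relevant ranges of $j$ and $p$; the combined ampleness of $L+Z-D$ (handling the Hodge shift coming from $\Mmod$) and of $L(pZ)$ for $1\le p\le k$ is exactly what is needed. The problem therefore reduces to the single graded piece at $p=k$. For part (1) with $i\ge 2$, Saito vanishing applied to $\gr^F_k\DR(\Mmod_1)\otimes L((k+1)Z)$ gives the result immediately; for $H^1$ in (1), the hypercohomology spectral sequence for this Koszul complex leaves as surviving contributions precisely the groups $H^j(X,\Omega_X^{n-j}\otimes L((k-j+1)Z))$ for $1\le j\le k$, which are exactly the ones hypothesized to vanish. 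Part (2) follows in the same way: when $k\ge n+1$ the filtered complex has length $n+1$ and the single ampleness of $L((k+1)Z)$ forces every Koszul-term twist to be ample. Part (3) is obtained from (1) and (2) via the substitution $L = M(-Z)$: then $L+Z-D = M-D$ and $L(pZ) = M((p-1)Z)$, so the conditions translate into ``$D+pZ$ ample for $0\le p\le k-1$''.

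The main obstacle I anticipate is the identification of $F_p\Mmod_1$ under the $(k-1)$-log-canonical assumption and the verification that the reduction of the graded pieces to clean Koszul complexes is compatible with the differentials of the de Rham complex; this ultimately rests on the strictness of the Hodge filtration on mixed Hodge modules, which is at the heart of Saito's theory. A secondary subtlety is that Saito vanishing for a direct summand requires an ample twist on the whole ambient mixed Hodge module $\Mmod$, and matching this to the hypothesis ``$L+Z-D$ ample'' requires careful bookkeeping through the cyclic-cover construction of \S\ref{scn:Q-divs}, in particular tracking how the line bundle $M$ with $M^{\otimes\ell} = \shO_X(H)$ enters each step of the filtration.
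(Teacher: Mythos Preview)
Your treatment of parts (1) and (2) is essentially the paper's argument: Saito's Kodaira-type vanishing for the graded pieces $\gr_p^F\DR(\Mmod_1)$, combined with the simplification of $F_p\Mmod_1$ to a line bundle for $p<k$ coming from $(k-1)$-log-canonicity, is exactly what is used.

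Your argument for part (3), however, has a genuine gap. You claim (3) follows from (1) and (2) by substituting $L = M(-Z)$, but with this choice $L + Z - D = M - D$, and since $M^{\otimes \ell} \simeq \shO_X(H)$ with $D = \tfrac{1}{\ell}H$, the $\QQ$-divisor $M - D$ is numerically trivial, not ample. Hence the key hypothesis ``$L + Z - D$ ample'' in (1) and (2) fails, and the substitution does not go through. The list ``$D + pZ$ ample for $0 \le p \le k-1$'' accounts only for the conditions ``$L(pZ)$ ample for $1 \le p \le k$'', not for this first one.

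The paper handles (3) by a genuinely different mechanism: instead of Saito's Kodaira-type vanishing, one uses Artin vanishing (on affine varieties) for the perverse sheaf associated to $\Mmod$ via the Riemann-Hilbert correspondence. The point is that $\Mmod$ is of the form $j_+$ from the open set $U = X \smallsetminus Z$, and the ampleness assumptions on $D + pZ$ provide enough positivity on the complement side for an Artin-type argument to replace the missing ample twist. So for (3) you need to abandon the substitution idea and invoke this alternative vanishing input.
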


The main ingredient in the proof is Saito's Kodaira-type vanishing theorem \cite[\S2.g]{Saito-MHM} for mixed Hodge modules, 
stating that if 
$(\Mmod, F_\bullet)$ is the filtered $\Dmod$-module underlying a mixed Hodge module on a projective variety $X$, then
$${\bf H}^i \big(X, \gr_k^F {\rm DR}(\Mmod, F_\bullet) \otimes L \big) = 0 \,\,\,\,\,\,{\rm for~all}\,\,\,\,i >0,$$
where $L$ is any ample line bundle, and $\gr_k^F {\rm DR}(\Mmod, F_\bullet)$ denotes for each $k$ the associated graded of the 
induced filtration on the de Rham complex of $\Mmod$. See \cite{Schnell2}, \cite{Popa} for more on this theorem, and also \cite[\S3]{Popa2} for a guide to interesting generalizations. In (3), this is replaced by Artin vanishing (on affine varieties) for the 
perverse sheaf associated to $\Mmod$ via the Riemann-Hilbert correspondence.

\begin{remark}\label{better_vanishing}
When $X$ has cotangent bundle with special properties, for instance when it is an abelian variety or $\PP^n$ (or more generally a 
homogeneous space), the hypotheses on $(k-1)$-log canonicity and borderline Nakano-type vanishing are not needed, so 
vanishing holds in a completely arbitrary setting; see for instance \cite[\S25,~\S28]{MP1}. Similarly, stronger vanishing holds on 
toric varieties \cite{Dutta}.
\end{remark}

It will be important to address the following natural problem for non-reduced divisors:

\begin{question}
Does vanishing for $\QQ$-divisors hold without the global assumption on the existence of $\ell$-th roots of $\shO_X(H)$ at the beginning of the section?
\end{question}

\subsection{Example of application: pluri-theta divisors on abelian varieties}\label{scn:pluritheta}
The goal here is to see explicitly how the combination of local nontriviality criteria and global vanishing for Hodge ideals 
can be put to use towards concrete applications. I will focus on one example: divisors in pluri-theta linear series on principally polarized abelian varieties. The statement below partially extends the result \cite[Theorem~I]{MP1} about theta divisors; the general idea is quite similar.

Let $(A, \Theta)$ be a principally polarized abelian variety of dimension $g$. Let $D\in |n \Theta|$ for some $n \ge 1$, whose support $Z$ has only isolated singularities.\footnote{If $D\in |n \Theta|$ is \emph{any} divisor, then \cite[Proposition 3.5]{EL} implies that ${\rm mult}_x D \le ng$ for arbitrary $D$, 
which is stronger than one of the results I had originally listed here.}

\begin{theorem}\label{pluritheta}
Under the hypotheses above, if $\epsilon (\Theta)$ denotes the Seshadri constant of $\Theta$, and $x \in D$, we have:

\begin{enumerate}
\item If $A$ is general in the sense that $\rho(A) = 1$, then ${\rm mult}_x D \le  n^2 \epsilon (\Theta)+ n$.
\item If $D$ is reduced, then ${\rm mult}_x D \le  n \epsilon (\Theta)+ 1$.
\end{enumerate}
\end{theorem}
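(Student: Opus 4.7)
The approach I would take is to proceed by contradiction, following the standard Hodge-ideal blueprint: assume that $m := {\rm mult}_x D$ exceeds the claimed bound and derive a contradiction by producing a global section whose order of vanishing at $x$ is incompatible with the Seshadri constant of $\Theta$.

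For Case (2), since $D$ is reduced I set $Z = D$ and apply Theorem \ref{nontriviality_reduced}(1), which yields $I_k(D)\subseteq \mathfrak{m}_x^{m-1}$ for every $k$ large enough that $(k+1)m\geq g+m-1$. Taking $L = \shO_A$, the hypothesis ``$L(pZ)$ ample for $1\leq p\leq k$'' of Theorem \ref{vanishing_Hodge_ideals}(1) is automatic since $D = n\Theta$ is ample, and Remark \ref{better_vanishing} drops the remaining $(k-1)$-log-canonicity and Nakano hypotheses on an abelian variety, so that
$$H^i\bigl(A,\shO_A((k+1)D)\otimes I_k(D)\bigr) = 0 \quad \text{for } i>0.$$
Riemann-Roch on $A$ then gives $h^0 = ((k+1)n)^g - \dim_{\CC}\bigl(\shO_A/I_k(D)\bigr)$; since the length of $\shO_A/I_k(D)$ stabilises as $k$ grows (the generating level of the Hodge filtration being finite), this is strictly positive for $k$ large enough. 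Any nonzero section then corresponds to an effective divisor in $|(k+1)n\Theta|$ vanishing to order at least $m-1$ at $x$, so on the blow-up $\mu\colon\tilde A\to A$ with exceptional divisor $E$ it lies in $|\mu^*((k+1)n\Theta) - (m-1)E|$. Intersecting with (the limit of) a curve $C\ni x$ realising the Seshadri infimum $\epsilon(\Theta)$ will yield $m - 1 \leq (k+1)n\,\epsilon(\Theta)$. An optimisation in $k$, combined with the analogous argument at $k=0$ via $I_0(D) = \I((1-\delta)D)$ (Example \ref{I_0}) and classical Nadel vanishing for $\shO_A(\Theta)\otimes I_0(D)$ in the limit $\delta\to 0$, produces the sharp bound $m \leq n\,\epsilon(\Theta) + 1$.

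In Case (1), with $\rho(A)=1$ and $D$ possibly non-reduced, I would substitute Corollary \ref{inclusion_power} for Theorem \ref{nontriviality_reduced}: the resulting inclusion $I_k(D)\subseteq \mathfrak{m}_x^q$ has $q$ constrained by both $m$ and $a := {\rm mult}_x Z$, and since $a\geq 1$ while $m$ may reach $na$, the estimate is effectively weaker by a factor of $n$. The hypothesis $\rho(A) = 1$ guarantees that every line bundle occurring in the vanishing/Riemann-Roch/Seshadri chain is numerically proportional to $[\Theta]$, so the same template as in Case (2) carries through and produces the weaker $m\leq n^2\,\epsilon(\Theta) + n$.

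The hard part will be ensuring that the Euler characteristic furnished by the vanishing theorem is strictly positive for a Hodge-ideal level $k$ and auxiliary line bundle $L$ small enough that the Seshadri end-game matches the sharp constants in (1) and (2). In Case (2) the $+1$ correction arises from descending from $I_k(D)$ with $k\geq 1$ down to $I_0(D)=\I((1-\delta)D)$ and invoking Nadel vanishing as $\delta\to 0$, which is where the concrete improvement of Hodge-ideal non-triviality over classical multiplier-ideal non-triviality at the edge of log-canonicity enters. Once the global section is produced, the Seshadri endgame reduces to a routine blow-up intersection calculation.
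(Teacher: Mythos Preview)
Your template has the right shape (nontriviality criterion $+$ vanishing $+$ Seshadri), but the ``optimisation in $k$'' step does not close. From $I_k(D)\subseteq\mathfrak m_x^{m-1}$ and a section of $\shO_A\big((k+1)n\Theta\big)\otimes I_k(D)$ you get $m-1\le (k+1)n\,\epsilon(\Theta)$, an inequality that \emph{deteriorates} as $k$ grows; and the smallest admissible $k$ (coming from $(k+1)m\ge g+m-1$, plus your $h^0>0$ requirement) depends on $g$ and $m$ in a way that never collapses to $m\le n\epsilon(\Theta)+1$. Descending to $k=0$ does not help either: the multiplier ideal $I_0(D)$ is not contained in $\mathfrak m_x^{m-1}$ in general. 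Incidentally, the colength of $\shO_A/I_k(D)$ does \emph{not} stabilise---finite generating level only says higher $I_k$ are determined by a fixed one, not that they coincide.

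The paper's mechanism is different on two counts. First, the target power of $\mathfrak m_x$ is chosen to \emph{grow with $k$}: one sets $q=s_{n(k+1)}+2$, where $s_\ell$ is the number of jets separated by $|\ell\Theta|$. Second, instead of producing a section and intersecting with a Seshadri curve, one uses the $H^1$ vanishing twisted by every $\alpha\in\Pic^0(A)$, combined with the \emph{isolated singularities} hypothesis (which you do not invoke) to pass from $I_k(Z)$ to $\mathfrak m_x^{q}$, and then the fact that these twists are translates to conclude that $|n(k+1)\Theta|$ separates $(s_{n(k+1)}+1)$-jets---a direct contradiction. This yields, for every $k\ge 1$, an inequality of the form $m\le \dfrac{n(k+1)\epsilon(\Theta)+g+k+1}{k+1}$, which \emph{improves} as $k\to\infty$ to the stated bound.

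For Case~(1), the role of $\rho(A)=1$ is not numerical proportionality of the line bundles in the vanishing chain. It is used at the outset to bound the coefficients of the irreducible components of $D$ by $n$, hence $D\le nZ$ and ${\rm mult}_x Z\ge \lceil m/n\rceil$; one then runs the argument of Case~(2) with the \emph{reduced} divisor $Z$ (using a GV-sheaf trick to upgrade the vanishing from $(k+1)Z$ to $n(k+1)\Theta$), which is where the extra factor of $n$ appears.
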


Recall that the definition of the Seshadri constant easily implies that $\epsilon (\Theta) \le \sqrt[g]{g!}$, see \cite[Proposition 5.1.9]{Lazarsfeld}, though often its value can be much lower.  Before proving the theorem, let's introduce the notation $s (\ell, x)$ for  the largest 
integer $s$ such that the linear system $|\ell \Theta|$ separates $s$-jets at $x$, i.e. such that the restriction map
$$H^0 \big(A, \shO_A (\ell \Theta) \big) \longrightarrow H^0 \big(A, \shO_A (\ell \Theta) \otimes \shO_A /\mathfrak{m}_x^{s+1} \big)$$ 
is surjective. A basic fact is that 
\begin{equation}\label{seshadri_inequality}
\frac{s (\ell, x)}{\ell} \le \epsilon (\Theta, x),
\end{equation}
the Seshadri constant of $\Theta$ at $x$, and that $\epsilon (\Theta, x)$ is the limit of these quotients as $\ell \to \infty$; see \cite[Theorem 5.1.17]{Lazarsfeld} and its proof. Since $A$ is homogeneous, $\epsilon (\Theta, x)$ does not actually depend on $x$, so it is denoted  
$\epsilon (\Theta)$.

\begin{proof}[Proof of Theorem \ref{pluritheta}]
We prove (1), and at the end indicate the necessary modification needed to deduce (2).

Write $D = \sum a_i Z_i$, with $Z_i$ prime divisors,  so that $Z = \sum Z_i$. 
For each $i$ we have 
\begin{equation}\label{inequality2}
a_i Z_i \cdot \Theta^{g-1} \le H \cdot \Theta^{g-1} = n \cdot g!.
\end{equation}
On the other hand, the assumption on $A$ implies that $Z_i \cdot \Theta^{g-1} \ge g!$,
hence it follows that $a_i \le n$. Thus $D \le n Z$, so if $m = {\rm mult}_x (D)$, then 
$${\rm mult}_x (Z) \ge \lceil \frac{m}{n} \rceil.$$
Since $x$ is fixed, for simplicity we denote $s_\ell = s (\ell, x)$. I claim that 
\begin{equation}\label{inequality}
\frac{m}{n} \le \lceil \frac{m}{n}  \rceil \le \frac{(s_{n(k+1)} + g + k +1)}{k+1}, \,\,\,\,\,\,{\rm for~all}\,\,\,\,k \ge 1.
\end{equation}
Assuming the opposite inequality for some $k$, by Theorem \ref{nontriviality_reduced}(2) we have 
$$I_k (Z) \subseteq \frak{m}_x^{s_{n (k+1)} + 2}.$$
Now according to the vanishing in \cite[Theorem~28.2]{MP1}, a refinement on abelian varieties of the statement of 
Theorem \ref{vanishing_Hodge_ideals} (cf. Remark \ref{better_vanishing}), we have: 
$$H^1 \big(A, \shO_A((k+1)Z) \otimes \alpha \otimes I_k (Z) \big) = 0$$
for every $\alpha \in \Pic^0 (A)$. It is clear that we can write 
$$(k+1)D = (k+1)Z + N,$$ 
where $N$ is a nef divisor on $A$ (in fact either $0$ or ample). On the other hand, nef line bundles on abelian 
varieties are special examples of what are called $GV$-sheaves (a condition involving the Fourier-Mukai transform, see e.g. 
\cite[\S2]{PP5}), so we conclude using \cite[Proposition~3.1]{PP5}\footnote{The local freeness condition in the statement in \emph{loc. cit.} is not needed in its proof.} that we have 
$$H^1 \big(A, \shO_A ( n(k+1) \Theta) \otimes \alpha \otimes I_k (Z) \big) = 0 \,\,\,\,\,\,{\rm for~all}\,\,\,\, \alpha \in \Pic^0 (A).$$

Going back to the inclusion $I_k (Z) \subseteq \frak{m}_x^{s_{n(k+1)} +2}$, since $Z$ has only isolated 
singularities, the quotient $\frak{m}_x^{s_{n(k+1)} +2}/ I_k (Z)$ is supported in dimension $0$. We obtain 
$$H^1 \big(A, \shO_A (n(k+1)\Theta) \otimes \alpha \otimes \frak{m}_x^{s_{n (k+1) +2}} \big) = 0$$
for every $\alpha \in \Pic^0 (A)$. 
But the collection of line bundles $\shO_A (n(k+1)\Theta) \otimes \alpha$ is, as $\alpha$ varies in $\Pic^0(X)$, the same as the collection of line bundles $t^*_a \shO_A (n(k+1)\Theta)$ as $a$ varies in $X$, where $t_a$ denotes translation by $a$. Therefore the
vanishing above is equivalent to the statement that $|n(k+1)\Theta|$ separates $(s_{n(k+1)} + 1)$-jets, which gives a contradiction and  proves (\ref{inequality}).

Finally, since $s_{n(k+1)} \le  n(k+1) \epsilon (\Theta)$ by ($\ref{seshadri_inequality}$), we deduce that
$$m \le  \frac{n( n(k+1) \epsilon (\Theta) + g+ k +1)}{k+1}, \,\,\,\,\,\,{\rm for~all}\,\,\,\,k \ge 1.$$
Letting $k \to \infty$,  we obtain the inequality in the statement.

For the statement in (2), note that under the extra assumption we have $D = Z$, so we know directly that ${\rm mult}_x (Z) = m$.
Therefore we don't need to divide by $n$ in all the formulas above, while the rest of the argument is completely identical.
\end{proof}

\subsection{$V$-filtration and microlocal log-canonical threshold}\label{Vfiltration}
In this final section I turn to the connection between Hodge ideals and the $V$-filtration, first noted in \cite{Saito-MLCT}. For a $\QQ$-divisor $D$ on $X$, defined locally as $D = \alpha \cdot {\rm div}(f)$, just as in \S\ref{scn:Q-divs} we assume for simplicity that $Z = {\rm div}(f)$ is the reduced structure on $D$, and that $0 < \alpha \le 1$. The corresponding statements for arbitrary $D$ can be found in \cite{MP4}.

We return to the notation introduced in \S\ref{general_Vfiltration}. Recall that for a $\Dmod_X$-module $\Mmod$, we denote by 
$\Mmod_f$ its pushforward via the graph of $f$.  In line with \cite{Saito-B} and 
\cite{Saito-MLCT}, I will use the notation
$$\shB_f : = (\shO_X)_f, \,\,\,\,\,\,\shB_f(*Z) : = (\shO_X (*Z))_f,  \,\,\,\,\,\,{\rm and}\,\,\,\,\,\, \shB_f^\beta(*Z) : = (\shO_X (*Z)f^{\beta})_f.$$

One can use the $V$-filtration on $\shB_f$ in order to define some interesting ideals on $X$ associated to $D$.

\begin{definition}
For each $k\ge 0$, we define 
$$\tilde{I}_k (D) : = \{v \in \shO_X ~|~ \exists~ v_0, v_1, \ldots , v_k = v \in \shO_X {\rm ~such~that~} \sum_{i = 0}^k v_i \otimes \partial_t^i \in V^\alpha \shB_f\}\subseteq \shO_X.$$
Since $0 < \alpha \le 1$, this is just another way of writing the filtration 
$\widetilde{V}^{\bullet} \shO_X$ induced 
on $\shO_X$ by Saito's \emph{microlocal $V$-filtration} \cite{Saito-MV}, \cite{Saito-MLCT}. In the notation of \emph{loc. cit.}, we have
\begin{equation}\label{micro}
\tilde{I}_k (D) = \widetilde{V}^{k + \alpha} \shO_X.
\end{equation}
\end{definition}

When $D = Z$ is a reduced divisor (i.e. $\alpha =1$), a comparison theorem between Hodge ideals and these ``microlocal" ideals was established  recently by Saito.

\begin{theorem}\cite[Theorem~1]{Saito-MLCT}\label{saito}
If $D$ is reduced, then for every $k \ge 0$ we have
$$I_k (D) = \tilde{I}_k (D)  \,\,\,\,{\rm mod}~f.$$
\end{theorem}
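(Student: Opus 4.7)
I would compare the two ideals by embedding both into the filtered $\Dmod_Y$-module $\shB_f(*Z)=i_{f,+}\shO_X(*Z)$ on $Y=X\times\CC$. By the standard pushforward formula for filtered $\Dmod$-modules along a closed embedding, the Hodge filtration satisfies
\[
F_p\shB_f(*Z)=\bigoplus_{i=0}^{p}F_{p-i}\shO_X(*Z)\otimes\partial_t^i,
\]
while by definition $F_k\shO_X(*Z)=\shO_X((k+1)Z)\otimes I_k(D)$. Thus $I_k(D)$ can be read off as the ideal of top $\partial_t$-coefficients (modulo lower-order terms) of sections of $F_k \shB_f(*Z)$, which is formally parallel to the description of $\tilde I_k(D)$ as the ideal of top $\partial_t$-coefficients of lifts $\sum v_i\otimes\partial_t^i$ lying in $V^1\shB_f$.

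\textbf{Main technical input.} The essential ingredient is Saito's strict compatibility $F_p V^\alpha = V^\alpha\cap F_p$ on modules underlying mixed Hodge modules \cite{Saito-MHP}, together with his identification of the usual $V$-filtration on $\shB_f$ with the microlocal $V$-filtration on $\widetilde{\shB}_f=\shB_f[\partial_t^{-1}]$ from \cite{Saito-MV}. Combined, these give a precise characterization, extending Theorem~\ref{Saito1}(2), of when a section $u/f^{k+1}$ lies in $F_k\shO_X(*Z)$: one needs to find coefficients $v_0,\ldots,v_{k-1}\in\shO_X$ and $v_k\equiv u\pmod{f}$ such that $\sum_{i=0}^{k} v_i\otimes\partial_t^i\in V^1\shB_f$. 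Reading this characterization in one direction produces $\tilde I_k(D)\subseteq I_k(D)+(f)$ (given a lift in $V^1$, its top coefficient reduces mod $f$ to an element of $I_k(D)$), and in the other direction $I_k(D)\subseteq \tilde I_k(D)+(f)$ (any element of $I_k(D)$ arises, mod $f$, as the top coefficient of such a lift), yielding the stated congruence.

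\textbf{Main obstacle.} The hard part is establishing this refined characterization of $F_k \shO_X(*Z)$ in terms of $V^1\shB_f$ rather than just $V^1\shO_X$ at the level of $k=0$. The key is tracking how the commutation rule $t\cdot(g\otimes\partial_t^i)=fg\otimes\partial_t^i-ig\otimes\partial_t^{i-1}$ and the shifts $\partial_t\cdot V^\alpha\subseteq V^{\alpha-1}$, $t\cdot V^\alpha\subseteq V^{\alpha+1}$ interact with the strict filtration under the nilpotent action of $\partial_t t-\alpha$ on $\gr_V^\alpha\shB_f$. Each time one passes between the two filtrations, an $f$-multiple appears in the lower-order coefficients $v_0,\ldots,v_{k-1}$; this is precisely the source of the congruence modulo $f$ in the conclusion rather than an equality on the nose, and it is this bookkeeping that Saito carries out in \cite{Saito-MLCT}.
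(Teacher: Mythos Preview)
The paper does not contain a proof of this statement: it is a survey, and Theorem~\ref{saito} is simply quoted from \cite[Theorem~1]{Saito-MLCT} without argument. There is therefore nothing in the paper to compare your proposal against directly.

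That said, your sketch identifies the correct ingredients and is in line with Saito's actual argument in \cite{Saito-MLCT}: the comparison takes place inside $\shB_f$ (and its localization $\shB_f(*Z)$), the key inputs are the strict compatibility of the Hodge and $V$-filtrations together with the microlocal $V$-filtration formalism of \cite{Saito-MV}, and the ambiguity modulo $f$ arises exactly from the $t$-action formula $t\cdot(g\otimes\partial_t^i)=fg\otimes\partial_t^i-ig\otimes\partial_t^{i-1}$ when passing between $\partial_t$-expansions. Note also that the paper records a strengthening in Theorem~\ref{general_description} (from \cite{MP4}), which gives an explicit formula for $I_k(D)$ in terms of $V^\alpha\shB_f$ rather than only a congruence, and from which the statement here follows as a corollary; the proof strategy there relies on Sabbah's description of the $V$-filtration via Bernstein--Sato polynomials of individual elements, which is a somewhat different route than the one you outline.
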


The statement means that the equality happens only in the quotient $\shO_D$. For $k =0$ it holds without modding out by $f$, by Theorem \ref{BS}. However, for higher $k$ it does not necessarily hold in $\shO_X$; see Remark \ref{calculations}.

Its extension to arbitrary $\QQ$-divisors is established in \cite{MP4}, 
as a consequence of a statement which is more explicit, in the sense of completely computing Hodge ideals in terms of the $V$-filtration, even in the reduced case. For $i \ge 0$, we denote 
$$Q_i(X)=\prod_{j=0}^{i-1}(X + j) \in \ZZ[X].$$

\begin{theorem}\cite{MP4}\label{general_description}
If $D$ is a $\QQ$-divisor as above, then  for every $k \ge 0$ we have
$$I_k (D)=\left\{\sum_{j=0}^p Q_j(\alpha)f^{p-j}v_j~ | ~\sum_{j=0}^pv_j\otimes\partial_t^j\delta\in 
V^{\alpha} \shB_f \right\}.$$
In particular, we have
$$I_k (D)  = \tilde{I}_k (D)  \,\,\,\,{\rm mod}~f.$$
\end{theorem}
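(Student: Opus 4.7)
The plan is to establish the explicit description of $I_k(D)$ by lifting the computation from $\Mmod(f^{\beta})$ to its graph-embedding pushforward $\shB_f^{\beta}(*Z) = i_{f,+}\Mmod(f^{\beta})$, on which both the Hodge filtration and the $V$-filtration are available and interact well. By the convolution definition of pushforward, $F_p\shB_f^{\beta}(*Z) = \bigoplus_{i=0}^{p}F_{p-i}\Mmod(f^{\beta})\otimes\partial_t^i$, so the ideal $I_k(D)$ is exactly what controls the $i=0$ component of $F_k\shB_f^{\beta}(*Z)$, after dividing by $f^{\beta}$ and absorbing the twist $\shO_X(kZ+H)$ of Definition \ref{Hodge_ideals}. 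Thus everything reduces to identifying $F_k\shB_f^{\beta}(*Z)$ in terms of $V^{\alpha}\shB_f$.

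The bridge to the $V$-filtration on $\shB_f$ is a natural isomorphism $\shB_f^{\beta}(*Z)\simeq \shB_f\cdot f^{\beta}$, together with the elementary calculation
$$\partial_t^j(t^{-\alpha})=(-1)^j Q_j(\alpha)\,t^{-\alpha-j},$$
equivalently the corresponding identity for $\partial_t^j$ applied to the generator $f^{\beta}\delta$ of the twisted module (recall $\alpha = 1 - \beta$). This is precisely the source of the coefficients $Q_j(\alpha)=\prod_{i=0}^{j-1}(\alpha+i)$: they are the Pochhammer products that arise when one repeatedly differentiates the twisting factor with respect to $\partial_t$, and they are what remembers the shift of eigenvalue of $\partial_t t$ between $\shB_f$ and $\shB_f^{\beta}(*Z)$.

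The main technical step, and the principal obstacle, is matching the two filtrations correctly: one must verify that elements of $V^{\alpha}\shB_f$ written as $\sum_{j=0}^{p}v_j\otimes\partial_t^j\delta$ produce, via the twist map above, exactly those elements of $F_p\shB_f^{\beta}(*Z)$ whose $i=0$ component equals the claimed combination $\sum_{j=0}^p Q_j(\alpha)f^{p-j}v_j$ (up to the overall twisting factor). This relies on the uniqueness of the rational $V$-filtration of Kashiwara--Malgrange, and extends Saito's compatibility theorem between the Hodge and $V$-filtrations in the reduced case (Theorem \ref{saito}, from \cite{Saito-MLCT}) to arbitrary $0<\alpha\le 1$. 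The heart is to check that the candidate filtration on $\shB_f^{\beta}(*Z)$ built from $V^{\bullet}\shB_f$ via the $f^{\beta}$-twist satisfies the axioms of a rational $V$-filtration (with eigenvalues of $\partial_t t-\alpha'$ shifted correctly by $\beta$), and then to use the general principle for mixed Hodge modules that the Hodge filtration on a ``localization with twist'' is determined by its interaction with $V^{\alpha}$ together with $\partial_t$. Once this matching is in place, unwinding Definition \ref{Hodge_ideals} yields the asserted formula.

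Finally, the consequence $I_k(D)\equiv\tilde{I}_k(D)\pmod{f}$ is an immediate corollary with $p=k$: in the sum $\sum_{j=0}^k Q_j(\alpha)f^{k-j}v_j$, every term with $j<k$ carries a factor of $f$, so modulo $f$ only $Q_k(\alpha)v_k$ survives. Since $Q_k(\alpha)=\prod_{i=0}^{k-1}(\alpha+i)$ is a nonzero constant for $0<\alpha\le 1$, this recovers $\tilde{I}_k(D)$ as defined by (\ref{micro}).
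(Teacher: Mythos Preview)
Your outline is essentially the approach described in the paper: pass to the graph embedding, relate the $V$-filtration on $\shB_f^{\beta}(*Z)$ to that on $\shB_f$, and then use the compatibility of the Hodge and $V$-filtrations to read off $I_k(D)$; the deduction of the ``mod $f$'' statement from the explicit formula is exactly as you say. The one point worth flagging is the step you label ``the main technical step'': you propose to build a candidate $V$-filtration on $\shB_f^{\beta}(*Z)$ from $V^{\bullet}\shB_f$ and verify the axioms directly via uniqueness, whereas the paper singles out that \cite{MP4} carries this out using Sabbah's description of the $V$-filtration in terms of the Bernstein--Sato polynomials of individual elements of the $\Dmod$-module \cite{Sabbah}. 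Your axiom-checking route is plausible in spirit, but the coherence and generation conditions are not obviously automatic under the $f^{\beta}$-twist, and Sabbah's elementwise characterization is precisely what makes the comparison between $V^{\bullet}\shB_f^{\beta}(*Z)$ and $V^{\bullet}\shB_f$ transparent; you should cite and use it rather than leave that step as a black box.
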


One of the key technical points in \cite{MP4} is a description of the $V$-filtration on $\shB_f^\beta (*Z)$ in terms of that 
on $\shB_f(*Z)$, based on Sabbah's computation of the $V$-filtration in terms of the Bernstein-Sato polynomials of 
individual elements in the $\Dmod$-module \cite{Sabbah}.

Theorem \ref{general_description} has consequences regarding the basic behavior of Hodge ideals that, surprisingly, at the moment are not 
known by other means. Recall for instance the chain of inclusions in Theorem \ref{ideals_reduced}(i); this seems unlikely to hold in the general $\QQ$-divisor 
case, but the following is nevertheless true, given ($\ref{micro}$).

\begin{corollary}\label{inclusions}
For each $k \ge 1$ we have 
$$I_k (D) + (f) \subseteq I_{k-1} (D) + (f).$$
\end{corollary}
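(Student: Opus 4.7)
The plan is to reduce the statement to the monotonicity of the microlocal $V$-filtration, using Theorem \ref{general_description} as the bridge. First, I would invoke Theorem \ref{general_description} twice, for indices $k$ and $k-1$, to obtain the congruences $I_k(D) \equiv \tilde{I}_k(D) \pmod{f}$ and $I_{k-1}(D) \equiv \tilde{I}_{k-1}(D) \pmod{f}$. These are precisely equivalent to the ideal equalities
$$I_k(D) + (f) = \tilde{I}_k(D) + (f), \qquad I_{k-1}(D) + (f) = \tilde{I}_{k-1}(D) + (f),$$
so the corollary is reduced to proving the containment $\tilde{I}_k(D) \subseteq \tilde{I}_{k-1}(D)$ in $\shO_X$, with no further use of the ideal $(f)$.

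Next, by the identification (\ref{micro}), namely $\tilde{I}_k(D) = \widetilde{V}^{k+\alpha}\shO_X$, this is in turn equivalent to
$$\widetilde{V}^{k+\alpha}\shO_X \subseteq \widetilde{V}^{(k-1)+\alpha}\shO_X.$$
Since $k \ge 1$ we have $k + \alpha > (k-1) + \alpha$, and the (rational) microlocal $V$-filtration on $\shO_X$ is decreasing by construction, exactly as in the axioms of a rational $V$-filtration recalled in \S\ref{general_Vfiltration}. This containment is therefore automatic, and combining it with the two congruences from the previous paragraph yields the chain
$$I_k(D) + (f) = \tilde{I}_k(D) + (f) \subseteq \tilde{I}_{k-1}(D) + (f) = I_{k-1}(D) + (f),$$
as desired.

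There is no real obstacle here: both ingredients — the explicit description of $I_k(D)$ modulo $f$ via the microlocal $V$-filtration, and the monotonicity of that filtration — are already available in the excerpt. The only thing to be careful about is that the reduction modulo $f$ is forced upon us (one cannot in general drop $(f)$ from the inclusion, as noted in the paragraph following Theorem \ref{saito}), which is exactly why the corollary is phrased with $(f)$ added to both sides rather than as a naive containment of Hodge ideals.
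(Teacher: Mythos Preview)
Your proof is correct and matches the paper's own argument: the paper states that the corollary follows from Theorem~\ref{general_description} ``given (\ref{micro}),'' which is exactly the combination you spell out---pass to $\tilde{I}_k(D)$ modulo $f$ and then use the decreasing property of the microlocal $V$-filtration.
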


Stronger statements hold for the first nontrivial ideal, as it is not hard to see that the $k$-log-canonicity of a divisor $D$ (see Definition \ref{k-log-can}) implies that
$(f) \subseteq I_{k+1}(D)$.

\begin{corollary}
If $(X, D)$ is $(p-1)$-log canonical, then 
$$\tilde{I}_p (D) \subseteq I_p (D) = \tilde{I}_p (D) + (f)$$ 
and also
$$I_{p+1} (D) \subseteq I_p (D).$$
In particular, we always have $I_1 (D) \subseteq I_0 (D)$.
\end{corollary}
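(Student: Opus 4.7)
My plan is to reduce everything to the explicit description of $I_p(D)$ in Theorem \ref{general_description}, combined with the author's preceding remark that $k$-log canonicity of $(X, D)$ forces $(f) \subseteq I_{k+1}(D)$. The numerical fact that makes the argument go through is that $Q_p(\alpha) = \alpha(\alpha+1)\cdots(\alpha+p-1)$ is a nonzero scalar whenever $0 < \alpha \le 1$ and $p \ge 1$, which will let me isolate the top coefficient $v_p$ after reducing modulo $f$.

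For the inclusion $\tilde{I}_p(D) \subseteq I_p(D)$, I would start with $v \in \tilde{I}_p(D)$ and pick witnesses $v_0, \ldots, v_{p-1}, v_p = v$ in $\shO_X$ with $\sum_{j=0}^p v_j \otimes \partial_t^j\delta \in V^\alpha \shB_f$. Feeding these into the formula of Theorem \ref{general_description} produces $\sum_{j=0}^p Q_j(\alpha) f^{p-j} v_j \in I_p(D)$; every term with $j < p$ is divisible by $f$, and hence lies in $(f) \subseteq I_p(D)$ by the $(p-1)$-log canonicity hypothesis, while the $j = p$ term is $Q_p(\alpha) v$. Dividing by the nonzero scalar $Q_p(\alpha)$ gives $v \in I_p(D)$. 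The equality $I_p(D) = \tilde{I}_p(D) + (f)$ is then immediate: Theorem \ref{general_description} already provides $I_p(D) + (f) = \tilde{I}_p(D) + (f)$, and the containment $(f) \subseteq I_p(D)$ upgrades the left-hand side to $I_p(D)$.

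The inclusion $I_{p+1}(D) \subseteq I_p(D)$ comes from Corollary \ref{inclusions}: it yields $I_{p+1}(D) + (f) \subseteq I_p(D) + (f) = I_p(D)$, and since $I_{p+1}(D) \subseteq I_{p+1}(D) + (f)$ this finishes. The unconditional statement $I_1(D) \subseteq I_0(D)$ is the case $p = 0$, where the log canonicity hypothesis is vacuous but one still needs $(f) \subseteq I_0(D)$; this is a short direct calculation from the log-resolution formula in Example \ref{I_0}, since $b_i + 1 + a_i - \lceil \alpha a_i \rceil \ge 0$ whenever $a_i, b_i \ge 0$ and $0 < \alpha \le 1$. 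The only real subtlety in the whole argument is picking out the correct coefficient of the sum in Theorem \ref{general_description} — it must be the top one, so that every other term is visibly in $(f)$ — and checking that the resulting scalar $Q_p(\alpha)$ is nonzero; everything else is formal.
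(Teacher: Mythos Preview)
Your proposal is correct and follows the route the paper itself sketches: use the remark that $(p-1)$-log canonicity forces $(f)\subseteq I_p(D)$, combine it with $I_p(D)+(f)=\tilde I_p(D)+(f)$ from Theorem~\ref{general_description}, and then apply Corollary~\ref{inclusions}. One small redundancy: your direct computation isolating the top term via $Q_p(\alpha)\neq 0$ is unnecessary, since once you know $I_p(D)=\tilde I_p(D)+(f)$ the inclusion $\tilde I_p(D)\subseteq I_p(D)$ is immediate; but the argument is correct as written, and your separate verification that $(f)\subseteq I_0(D)$ holds unconditionally is a useful detail the paper leaves implicit.
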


Another important consequence regards the behavior of the Hodge ideals $I_k (\alpha Z)$ when $\alpha$ varies. In the case of $I_0$, it is well known that they get smaller as $\alpha$ increases, and that there is a discrete set of values of $\alpha$ (called jumping coefficients) where there the ideal actually changes; see \cite[Lemma ~9.3.21]{Lazarsfeld}. This is not the case for higher $k$; for the cusp 
$Z = (x^2 + y^3 = 0)$ and $5/6 < \alpha \le 1$, one can see that 
$$I_2 (\alpha Z) = (x^3, x^2y^2, xy^3, y^4- (2 \alpha + 1) x^2 y),$$
and thus we obtain incomparable ideals. However, Theorem \ref{general_description} implies that the picture does becomes similar to that for multiplier ideals if one considers the images in $\shO_D$.

\begin{corollary}
Given any $k$, there exists a finite set of rational numbers $0 = c_0 < c_1 < \cdots < c_s < c_{s+1} = 1$ such that 
for each $0 \le i \le s$ and each $\alpha \in (c_i, c_{i+1}]$ we have 
$$I_k (\alpha Z)~{\rm mod} ~f = I_k (c_{i+1} Z)~{\rm mod} ~f = {\rm constant}$$
and such that 
$$I_k (c_{i+1} Z) ~{\rm mod} ~f\subseteq I_k (c_i Z) ~{\rm mod} ~f.$$
\end{corollary}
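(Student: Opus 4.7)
The plan is to deduce everything from Theorem~\ref{general_description}, which provides the identity
$$I_k(\alpha Z) \bmod f \;=\; \widetilde{V}^{k+\alpha} \shO_X \bmod f \qquad\text{for every } \alpha \in (0, 1],$$
thereby reducing the corollary to a statement purely about how the microlocal $V$-filtration $\widetilde{V}^{k+\alpha}\shO_X$ varies as $\alpha$ ranges over $(0, 1]$.

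The key input is the standard finiteness of jumps of the rational $V$-filtration. Namely, $V^\bullet \shB_f$ is a decreasing filtration whose jumps occur only at a discrete set of rational numbers, controlled locally by the shifted roots of the Bernstein--Sato polynomial of $f$; in particular, only finitely many jumps of $V^\bullet \shB_f$ fall inside the bounded interval $(k, k+1]$. By the definition of $\widetilde{V}^{k+\alpha}\shO_X$ as a subspace of $\shO_X$ cut out by a condition on $V^{k+\alpha}\shB_f$, the same finiteness holds for $\widetilde{V}^{\bullet}\shO_X$. Let $c_1 < c_2 < \cdots < c_s$ be the rationals in $(0, 1)$ such that $\{k+c_1, \ldots, k+c_s\}$ enumerates the jumping values of $\widetilde{V}^{\bullet}\shO_X$ in $(k, k+1)$, and set $c_0 = 0$, $c_{s+1} = 1$. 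By the convention for $V^\bullet$, the filtration $\widetilde{V}^{k+\alpha}\shO_X$ is constant on each half-open interval $\alpha \in (c_i, c_{i+1}]$, so
$$\widetilde{V}^{k+\alpha}\shO_X \;=\; \widetilde{V}^{k+c_{i+1}}\shO_X \qquad\text{for every } \alpha \in (c_i, c_{i+1}],$$
and combining with the displayed congruence above yields the equality $I_k(\alpha Z)\bmod f = I_k(c_{i+1}Z)\bmod f$. The containment $I_k(c_{i+1}Z)\bmod f \subseteq I_k(c_i Z)\bmod f$ then follows from monotonicity: since $k+c_{i+1} > k+c_i$, one has $\widetilde{V}^{k+c_{i+1}}\shO_X \subseteq \widetilde{V}^{k+c_i}\shO_X$, and applying Theorem~\ref{general_description} on both sides concludes.

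The one nontrivial ingredient, and the main obstacle, is the finiteness of jumps of $\widetilde{V}^\bullet \shO_X$ in any bounded interval. This goes back to Kashiwara--Malgrange: for the regular holonomic $\Dmod_Y$-module $\shB_f$ with quasi-unipotent local monodromy along $t=0$, nilpotency of $\partial_t t - \alpha$ on $\gr^\alpha_V \shB_f$ forces $\alpha$ to lie in a discrete subset of $\QQ$, and only finitely many such $\alpha$ fall in any compact subinterval. Once this is in hand, both assertions of the corollary are formal consequences of Theorem~\ref{general_description}.
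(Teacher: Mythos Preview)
Your proposal is correct and follows essentially the same approach as the paper: deduce the statement from Theorem~\ref{general_description} together with the identification $\tilde I_k(D)=\widetilde V^{\,k+\alpha}\shO_X$, and then invoke the finiteness and monotonicity of the (microlocal) $V$-filtration. The paper confirms this line of argument by noting immediately after the corollary that the $c_i$ lie among the jumping coefficients of the $V$-filtration on $\shB_f$ in $(k,k+1]$.
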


In fact, for a fixed $k$, the set of $c_i$ is contained in the set of jumping coefficients for the $V$-filtration on $\shB_f$
 in the interval $(k, k+1]$.

\begin{remark}[Calculations]\label{calculations}
There are also significant computational consequences; indeed, in \cite[\S2.2-2.4]{Saito-MLCT}, Saito fully computes the microlocal $V$-filtration for weighted-homogeneous isolated singularities. For example,  in the case of diagonal hypersurfaces $f = x_1^{a_1} + \cdots + x_n^{a_n}$ (which was previously obtained in \cite[Example~2.6]{MSS} using a Thom-Sebastiani type theorem), 
$\widetilde{V}^\alpha$ is generated by monomials of the form $x_1^{\nu_1}\cdots x_n^{\nu_n}$ satisfying
$$\sum_{i=1}^n \frac{1}{a_i} \left(\nu_i + 1 + \left[\frac{\nu_i}{a_i - 1}\right]\right)\ge \alpha.$$
Saito also shows in \emph{loc. cit.} that $I_1(D) = \tilde{I}_1 (D)$ in the reduced homogeneous case, though this typically fails for $k \ge 2$. Consider as an example the elliptic cone $D = (x^3 + y^3 + z^3 = 0) \subseteq \AAA^3$. The pair is log canonical, hence $I_0 (D) = \shO_X$. Moreover, it follows from the above that 
$$I_1(D) = \tilde{I}_1 (D) = (x^2, y^2, z^2, xyz).$$
Theorem \ref{generation_level}(1) implies that from this one can compute all other $I_k (D)$.  The calculations 
in \cite{Saito-MLCT} show however that the element $- 2x^4 + xy^3 + xz^3$ belongs to $I_2 (D)$, but not to $\tilde{I}_2 (D)$.
Many concrete calculations of Hodge ideals can also be performed based on the results in \cite{Saito-HF}; see also the upcoming 
\cite{Zhang:Vfiltration} for generalizations to $\QQ$-divisors.
\end{remark}

\noindent
{\bf Microlocal log canonical threshold.}
Part of the usefulness of the results  above stems from the connection between the (microlocal) $V$-filtration and
the Bernstein-Sato polynomial of $f$ and its roots; cf. \S\ref{general_Vfiltration}.  Most importantly for us here, and by analogy with the 
description of the log canonical threshold in terms of $V^\bullet \shO_X$, one has
$$\tilde{\alpha}_f = {\rm max}~\{\gamma \in \QQ~|~ \widetilde{V}^\gamma \shO_X = \shO_X\},$$
see for instance \cite[(1.3.8)]{Saito-MLCT}.
Therefore Theorem \ref{general_description} immediately implies the following formula for the log canonicity index of $D$, 
obtained first in \cite{Saito-MLCT} when $\alpha = 1$; recall 
that we are assuming $D = \alpha Z$ with $0 < \alpha \le 1$.

\begin{corollary}\label{mlc}
Let
$$p_0 := {\rm min}~\{p ~|~ I_p (D) \neq \shO_X\} =  {\rm max}~\{p ~|~ (X, D) {\rm~is~}(p-1){\rm -log~canonical}\}.$$
Then $p_0 = [\tilde{\alpha}_f - \alpha + 1]$.
\end{corollary}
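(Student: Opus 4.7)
The plan is to chain together Theorem~\ref{general_description}, the identity (\ref{micro}), and the standard characterization of $\tilde{\alpha}_f$ in terms of the microlocal $V$-filtration, reducing the statement to an arithmetic inequality in the real variable $k+\alpha$.

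First I would observe that, both for $I_k(D)$ and for $\tilde{I}_k(D)$, the question of triviality is local and is a statement only about points $x$ lying in the support $Z$ of $D$, since away from $Z$ all ideals under consideration equal $\shO_X$ tautologically. At such a point $x$ one has $f\in\mathfrak{m}_x$, so a Nakayama-style argument shows that for any coherent ideal $J\subseteq\shO_X$ containing the sheaf of ideals under discussion, the equality $J_x+(f)_x=\shO_{X,x}$ forces $J_x=\shO_{X,x}$ (because $J_x+\mathfrak{m}_x=\shO_{X,x}$ means $J_x$ meets the units). Consequently
\[
I_k(D)=\shO_X \iff I_k(D)+(f)=\shO_X \iff \tilde{I}_k(D)+(f)=\shO_X \iff \tilde{I}_k(D)=\shO_X,
\]
where the middle equivalence is exactly Theorem~\ref{general_description}.

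Next I would plug in (\ref{micro}), which identifies $\tilde{I}_k(D)$ with $\widetilde{V}^{k+\alpha}\shO_X$, and invoke the description of $\tilde{\alpha}_f$ as $\max\{\gamma\in\QQ \mid \widetilde{V}^\gamma\shO_X=\shO_X\}$ recalled in \S\ref{Vfiltration}. This gives
\[
I_k(D)=\shO_X \iff \widetilde{V}^{k+\alpha}\shO_X=\shO_X \iff k+\alpha\le \tilde{\alpha}_f.
\]
Therefore the largest integer $k$ for which $I_k(D)=\shO_X$ is $[\tilde{\alpha}_f-\alpha]$, and hence
\[
p_0=[\tilde{\alpha}_f-\alpha]+1=[\tilde{\alpha}_f-\alpha+1],
\]
which is the desired formula. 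Simultaneously, the same computation says that the largest $p$ for which $(X,D)$ is $(p-1)$-log canonical, i.e.\ $I_0(D)=\cdots=I_{p-1}(D)=\shO_X$, agrees with this value, so both descriptions of $p_0$ in the statement coincide.

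The only non-formal ingredient in this chain is Theorem~\ref{general_description}, so there is essentially no obstacle beyond that input; the main thing to be careful with is the reduction $I_k(D)+(f)=\shO_X\Rightarrow I_k(D)=\shO_X$, which is where the hypothesis $0<\alpha\le 1$ (so that $Z$ is cut out by $f$) enters, and the bookkeeping between the floor notation $[\,\cdot\,]$ and the strict inequality defining the minimum $p_0$.
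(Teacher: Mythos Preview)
Your argument is correct and is exactly the unpacking the paper has in mind when it says the corollary follows immediately from Theorem~\ref{general_description}: reduce triviality of $I_k(D)$ to triviality of $\tilde I_k(D)$ via the mod~$f$ comparison and Nakayama at points of $Z$, then use (\ref{micro}) and the characterization of $\tilde\alpha_f$ in terms of $\widetilde V^\bullet\shO_X$. There is nothing to add.
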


Corollary \ref{mlc} can be combined with the information in Corollary \ref{triviality_criterion}, coming from the birational description of Hodge ideals, in order to obtain the inequality
$$\gamma < [\tilde{\alpha}_f - \alpha] + \alpha + 1, \,\,\,\,\,\,{\rm for~ all} \,\,\,\, 0 < \alpha \le 1.$$
Going back to Question \ref{Lichtin} and the subsequent comments, optimizing as $\alpha$ varies we obtain the following partial positive answer to Lichtin's question:

\begin{theorem}\cite{MP4}\label{MLCT_bound}
We have $\gamma \le \tilde{\alpha}_f$.
\end{theorem}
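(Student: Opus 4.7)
My plan is to combine the two corollaries immediately preceding the theorem: the triviality criterion in Corollary \ref{triviality_criterion}, which comes from the birational description of Hodge ideals via the filtered complex in Theorem \ref{formula_log_resolution}, and the formula for the first nontrivial index $p_0$ in Corollary \ref{mlc}, which encodes the connection with the microlocal $V$-filtration from Theorem \ref{general_description}. The freedom to scale $D$ by an arbitrary $\alpha \in (0,1]$ and apply these results to $\alpha Z$---while keeping the log resolution, and hence $\gamma$, fixed---provides the needed flexibility.

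\textbf{The argument.} Fix a log resolution $f \colon Y \to X$ of $(X,Z)$ and let $\gamma$ be as in (\ref{gamma}); note that $\gamma > 0$ since every $a_i \ge 1$ and every $b_i \ge 0$. I now split $\gamma$ into an integer plus a piece in $(0,1]$ that saturates the triviality criterion: set
\[
k \;:=\; \lceil \gamma \rceil - 1 \ge 0, \qquad \alpha \;:=\; \gamma - k \in (0,1],
\]
so that $k + \alpha = \gamma$. Apply Corollary \ref{triviality_criterion} to the $\QQ$-divisor $D = \alpha Z$ (the quantity $\gamma$ depends only on $f$ and $Z$, hence is unchanged by this choice): the hypothesis $\gamma \ge k + \alpha$ holds with equality, and therefore $I_k(\alpha Z) = \shO_X$. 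On the other hand, Corollary \ref{mlc} says that the smallest index $p_0$ with $I_{p_0}(\alpha Z) \ne \shO_X$ equals $[\tilde\alpha_f - \alpha + 1] = [\tilde\alpha_f - \alpha] + 1$. The vanishing $I_k(\alpha Z) = \shO_X$ then forces
\[
k \;\le\; [\tilde\alpha_f - \alpha] \;\le\; \tilde\alpha_f - \alpha,
\]
and adding $\alpha$ yields $\gamma = k + \alpha \le \tilde\alpha_f$, as desired.

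\textbf{Main obstacle.} The final numerical step is straightforward; the real content is packed into the two corollaries being invoked. Corollary \ref{triviality_criterion} relies on Theorem \ref{formula_log_resolution}, whose proof rests on the filtered quasi-isomorphism (\ref{quasi_SNC}) identifying $C^\bullet_{g^{-\alpha}}(-\lceil G \rceil)$ with $\Mmod_r(g^{-\alpha})$ in the simple normal crossings case. The deeper input is Corollary \ref{mlc}, which depends on Theorem \ref{general_description} from \cite{MP4}: this is where the substantive work happens, since one must translate the Hodge filtration on the twisted localization $\shB_f^\beta(*Z)$ into the $V$-filtration on $\shB_f(*Z)$, building on Sabbah's computation of the $V$-filtration via the Bernstein--Sato polynomials of individual sections. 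Granting both corollaries, the derivation of $\gamma \le \tilde\alpha_f$ is, as shown, a short exercise in choosing $\alpha$ optimally.
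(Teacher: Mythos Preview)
Your proof is correct and follows essentially the same approach as the paper: combining Corollary~\ref{triviality_criterion} with Corollary~\ref{mlc} and varying $\alpha$. The paper first records the inequality $\gamma < [\tilde\alpha_f - \alpha] + \alpha + 1$ for all $0 < \alpha \le 1$ and then optimizes, whereas you select the optimal $\alpha = \gamma - (\lceil \gamma \rceil - 1)$ at the outset; this is only a cosmetic difference in organization.
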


One can analogously define a local version of the Bernstein-Sato polynomial of $f$ around each point $x \in Z$, and in particular a local version of $\tilde{\alpha}_f $ as well, 
denoted $\tilde{\alpha}_{f, x}$. 
Combining the results above with various statements from \cite{MP2}, \cite{MP3}, again using in a crucial way Hodge ideals for $\QQ$-divisors, one also obtains general 
properties of the invariant $\tilde{\alpha}_f$ that extend important features of the log canonical threshold.                                                                                  

\begin{theorem}\cite{MP4}
Let $n = \dim X$.  Then: 
\begin{enumerate}
\item If $x\in X$, $m={\rm mult}_x Z\geq 2$, and $\dim {\rm Sing} \big(\PP(C_xD)\big) = r$,\footnote{We use the convention that $r = -1$ when $\PP(C_xD)$ is smooth.}
where  $\PP(C_xD)$ is the projectivized tangent cone at $x$, then
$$\frac{n- r -1}{m}\le \widetilde{\alpha}_{f,x}\le \frac{n}{m}.$$
\item If $Y$ is a smooth irreducible hypersurface in $X$ such that $Y\not\subseteq Z$, then for every
$x\in Z\cap Y$, we have
$$\widetilde{\alpha}_{f\vert_Y,x}\leq \widetilde{\alpha}_{f,x}.$$
\item Let  $\pi\colon X\to T$ be a smooth morphism, and $s\colon T\to X$ a section such that 
$s(T)\subseteq Z$.  If $Z$ does not contain any fiber of $\pi$,  then the function $T \to \QQ$ given by 
$$t\to \widetilde{\alpha}_{f|_{X_t},s(t)}$$
is lower semicontinuous.
\end{enumerate}
\end{theorem}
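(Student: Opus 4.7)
The approach is to derive all three parts from \corollaryref{mlc} combined with corresponding properties of Hodge ideals for $\QQ$-divisors, tackling them in the order (3), then the upper bound of (1), then (2), then the lower bound of (1). The pivotal observation is that \corollaryref{mlc} yields the exact equivalence
$$I_k(\alpha Z)_x = \shO_{X,x} \quad\Longleftrightarrow\quad \widetilde\alpha_{f,x} \ge k+\alpha$$
for every $k\geq 0$ and $\alpha\in (0,1]\cap\QQ$. Consequently $\widetilde\alpha_{f,x}$ is the supremum of $k+\alpha$ over pairs for which the $k$-th Hodge ideal of $\alpha Z$ is trivial at $x$, so every semicontinuity or restriction property of Hodge ideals translates into a corresponding property of $\widetilde\alpha_f$.

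For (3) I would invoke the $\QQ$-divisor extension of \theoremref{ideals_reduced}(ix) (proved in \cite{MP3}), which gives openness of $\{t\in T : I_k(\alpha D_t)_{s(t)} = \shO\}$ for each $k$ and rational $\alpha\in(0,1]$. By the equivalence above this locus equals $\{t : \widetilde\alpha_{f_t,s(t)}\ge k+\alpha\}$; unioning over $(k,\alpha)$ with $k+\alpha>c$ writes $\{\widetilde\alpha>c\}$ as a union of open sets, giving lower semicontinuity.

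For the upper bound in (1), I would embed $f = f_m + f_{m+1}+\cdots$ in a one-parameter family by replacing $f_m$ with a general homogeneous form $f_m'$ of degree $m$ whose projectivization is smooth: set $f_t = (1-t)f_m + tf_m' + f_{m+1}+\cdots$. Generic members of this family acquire an ordinary $m$-fold point at $x$, for which \exampleref{homogeneous} gives $\widetilde\alpha = n/m$; lower semicontinuity from (3) then forces $\widetilde\alpha_{f,x}\leq n/m$. For (2), I would combine the restriction theorem for Hodge ideals (\theoremref{ideals_reduced}(vii), extended to $\QQ$-divisors in \cite{MP3}), which yields \emph{equality} $I_k(\alpha Z|_{H}) = I_k(\alpha Z)\cdot\shO_H$ for $H$ general in a family, with (3). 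Embed $Y$ as the central fiber of a family $\{Y_t\}$ of smooth hypersurfaces through $x$ whose generic member is transverse to a fixed log resolution of $(X,Z)$. For such generic $Y_t$ the restriction equality and the basic equivalence above force $\widetilde\alpha_{f|_{Y_t},x} = \widetilde\alpha_{f,x}$, and lower semicontinuity of $t\mapsto\widetilde\alpha_{f|_{Y_t},x}$ applied to the family $Z\cap Y_t\subset Y_t$ yields $\widetilde\alpha_{f|_{Y_0},x} \leq \widetilde\alpha_{f,x}$.

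For the lower bound in (1), I would combine \corollaryref{triviality_criterion} with the equivalence above: it suffices to produce a log resolution $\varphi\colon \widetilde X\to X$ of $(X,Z)$ whose invariant $\gamma = \min_i(b_i+1)/a_i$ satisfies $\gamma\geq (n-r-1)/m$. Starting from the blow-up of $x$, the exceptional divisor contributes $n/m$; continuing with successive blow-ups of smooth centers supported over ${\rm Sing}\,\PP(C_xZ)$, whose codimensions in the current ambient variety are controlled by $n-r$, an inductive accounting of multiplicities of strict and exceptional transforms should keep every ratio $(b_i+1)/a_i$ bounded below by $(n-r-1)/m$. The main obstacle is precisely this bookkeeping: one must verify that the discrepancy-to-multiplicity ratio along every newly introduced exceptional divisor remains at least $(n-r-1)/m$, and this is where the \emph{dimension} of ${\rm Sing}\,\PP(C_xZ)$, as opposed to its multiplicity, enters essentially. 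A secondary subtlety is in (2), where \theoremref{ideals_reduced}(vii) gives only an inclusion and one must use a Bertini-type argument ensuring that a generic smooth hypersurface through $x$ is transverse to the chosen log resolution, so that the equality case of the restriction theorem applies.
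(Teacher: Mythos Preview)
Your overall strategy---translating properties of Hodge ideals of $\QQ$-divisors into properties of $\widetilde\alpha_f$ via \corollaryref{mlc}---is exactly what the paper has in mind, and your treatment of (3) and of the upper bound in (1) via deformation to an ordinary singularity is fine.

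For (2), however, your argument is unnecessarily indirect, and your detour through the \emph{equality} case of the restriction theorem plus semicontinuity is not needed at all. The restriction \emph{inclusion} $I_k(\alpha Z|_Y) \subseteq I_k(\alpha Z)\cdot\shO_Y$ already suffices: if $\widetilde\alpha_{f|_Y,x}\ge k+\alpha$ then $I_k(\alpha Z|_Y)_x=\shO_{Y,x}$, hence $I_k(\alpha Z)_x\cdot\shO_{Y,x}=\shO_{Y,x}$, and since the local equation of $Y$ lies in $\mathfrak m_{X,x}$, Nakayama forces $I_k(\alpha Z)_x=\shO_{X,x}$, i.e.\ $\widetilde\alpha_{f,x}\ge k+\alpha$. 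Varying $(k,\alpha)$ gives (2) immediately. Your proposed route (equality for generic $Y_t$ through $x$, then semicontinuity in $t$) would also work, but it requires verifying that a general hypersurface \emph{through the singular point} $x$ is still ``general'' in the sense of the restriction theorem, and setting up an auxiliary family; none of this is needed.

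The genuine gap is in the lower bound of (1). Your proposed construction of a log resolution with $\gamma\ge (n-r-1)/m$ is left as unfinished bookkeeping, and it is not the intended route. The clean argument uses (2), which you have already established: intersect $Z$ with $r+1$ general hyperplanes through $x$. Each general hyperplane section preserves ${\rm mult}_x$ and cuts $\text{Sing}\bigl(\PP(C_xZ)\bigr)$ by a general hyperplane, decreasing its dimension by one. After $r+1$ cuts one obtains a divisor $Z'$ in a smooth $(n-r-1)$-dimensional germ with an \emph{ordinary} $m$-fold point at $x$, for which $\widetilde\alpha_{f',x}=(n-r-1)/m$. Applying (2) at each step gives $\widetilde\alpha_{f,x}\ge\widetilde\alpha_{f',x}=(n-r-1)/m$. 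This is the argument indicated by the paper's reference to the restriction results of \cite{MP2}, \cite{MP3}.
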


It follows for instance from (1) that for every singular point $x \in Z$ we have $\tilde{\alpha}_{f, x} \le n/2$.
This bound is optimal, since by Example \ref{homogeneous} for a quadric $f = x_1^2 + \cdots + x_n^2$ one has $\tilde{\alpha}_f = n/2$.
(In fact  we see in (1) that for every ordinary singular point, i.e  where $ r = -1$, we have $\tilde{\alpha}_f = n/m$, a well-known fact.) 
Saito shows in fact in \cite[Theorem~0.4]{Saito-MV} that all the negatives of the roots of $\tilde{b}_{f,x} (s)$ 
are contained in the interval $[\tilde{\alpha}_{f,x}, n - \tilde{\alpha}_{f,x}]$. Via Theorem \ref{saito}, this is related to Corollary \ref{smoothness_criterion}.

\bibliographystyle{amsalpha}
\bibliography{bibliography}

\end{document}